\newcommand{\tmtextit}[1]{{\itshape{#1}}}
\newenvironment{proof}{\noindent\textbf{Proof\ }}{\hspace*{\fill}$\Box$\medskip}
\newtheorem{theorem}{Theorem}[section]
\newtheorem{proposition}[theorem]{Proposition}
\newtheorem{corollary}[theorem]{Corollary}
\newtheorem{definition}[theorem]{Definition}
\newtheorem{lemma}[theorem]{Lemma}
\newtheorem{remark}[theorem]{Remark}
\def \A {{\mathcal {A}}}
\def \N {{\mathcal {N}}}
\def \d {{\mathrm {d}}}
\begin{document}

{\bigskip}

\begin{center}
{\large Nijenhuis structures 
on Courant algebroids}\\

{\bigskip}

Yvette Kosmann-Schwarzbach\\

{\bigskip}

{\small Centre de Math\'ematiques Laurent Schwartz\\
\'Ecole Polytechnique\\ 
91128 Palaiseau, France\\

\texttt{yks@math.polytechnique.fr}}
\end{center}

{\bigskip}

\begin{abstract}
We study Nijenhuis structures on
Courant algebroids in terms of the canonical Poisson bracket
on their symplectic realizations.
We prove that the Nijenhuis torsion of a skew-symmetric 
endomorphism $\N$ of a Courant algebroid is skew-symmetric if  
$\N^2$ is proportional to the identity, and only in this case when the
Courant algebroid is irreducible.
We derive a necessary and sufficient condition for a skew-symmetric 
endomorphism to give rise to a deformed Courant structure.
In the case of the double of a Lie bialgebroid $(A,A^*)$, given an
endomorphism $N$ of $A$ that defines a skew-symmetric endomorphism
$\N$ of the double of $A$, we prove that the torsion of $\N$ is 
the sum of the torsion of $N$ and that of the
transpose of $N$. 
\end{abstract}

\medskip

\section*{Introduction}
The aim of this paper is to study the infinitesimal deformations of Courant
algebroids. We shall consider in detail 
the case of the double of a Lie bialgebroid, in particular the 
case of the double of a trivial Lie bialgebroid, such as the
generalized tangent bundle of a manifold.

Nijenhuis operators for algebras other than Lie
algebras were first considered
by Fuchssteiner \cite{F}, while their role 
in the study of contractions of Lie algebras was
first discussed by Bedjaoui-Tebbal \cite{BT}.
The theory of Nijenhuis operators in the case of 
general algebraic structures was developed in \cite{CGM0} by Cari\~ nena,
Grabowski and Marmo, who identified the role they play in the
theory of contractions and deformations 
of both Lie algebras and Leibniz (Loday) algebras.
The case of Courant algebroids \cite{LWX}, which are important
examples of Leibniz algebroids \cite{Poncin}  and whose spaces of sections are 
therefore Leibniz algebras \cite{L}, 
was considered by them in \cite{CGM}, and then by Clemente-Gallardo and
Nunes da Costa in \cite{nunes}, in both papers with applications to the
deformation of Dirac structures. More recently, in  \cite{G},  
Grabowski related the results obtained in \cite{CGM}
to Roytenberg's graded supermanifold approach to Courant algebroids
\cite{R}, while Keller and Waldmann in \cite{KW}
proceed by an alternative approach, using the Rothstein algebra in
their study of the deformations of Courant algebroids. 
Nijenhuis tensors on the double of a Lie bialgebroid were also
considered by Vaisman, in the case of the generalized
tangent bundle
$TM \oplus T^*M$ of a manifold, in his study of the reduction of
generalized complex manifolds \cite{V}, by Sti\'enon and Xu in \cite{SX}
and by Antunes in \cite{A}. The relations of Nijenhuis structures on the double 
with Poisson-Nijenhuis structures have been 
discussed in these articles as well as in \cite{CGM}.
The deformation of generalized complex structures on manifolds was 
studied by Gualtieri in \cite{Gu}.

Our description of Nijenhuis structures and related concepts 
relies on the use of Roytenberg's
graded Poisson bracket on the
minimal symplectic realization of a Courant algebroid \cite{R}, and
on its interplay with the
big bracket \cite{R2002} \cite{yks2004} \cite{yksquasi} \cite{yksPM287}
when the Courant algebroid is 
the double of a Lie bialgebroid, and, in particular, in the case of
the generalized tangent bundle of a manifold.
We consider vector bundle endomorphisms that are skew-symmetric 
with respect to the fiberwise symmetric bilinear form of the Courant
algebroid, a natural assumption that expresses the infinitesimal
invariance of the symmetric bilinear form, and permits their inclusion
in computations in
the Poisson algebra of functions on the
minimal symplectic realization. 
We argue that, in the deformation theory of a Courant structure,
$\Theta$, by a
skew-symmetric tensor, $\N$, 
the decisive property is not the vanishing of the Nijenhuis torsion of
$\N$ but the property which we call `weak deforming' (Definition \ref{weakdef}).
When $\N^2$ is a scalar multiple of the identity, a condition that
appears repeatedly in \cite{CGM} and in \cite{nunes}, 
this condition is equivalent to
the `weak Nijenhuis' condition introduced in \cite{CGM},  $\N$ is a
weak Nijenhuis tensor if the Nijenhuis torsion of $\N$ is a cocycle
for the differential $\d_\Theta = \{ \Theta, \cdot ~\}$.
Our approach yields both new proofs of known results of \cite{CGM} and
\cite{nunes}, which we obtain with few or no computations, 
and several results which we believe
have not appeared elsewhere, especially Theorems \ref{CNS}, \ref{A3bialg} and
\ref{theoremweakdef}.

In Section \ref{section1} we recall results of \cite{CGM0} and
\cite{CGM} on Nijenhuis structures on Leibniz algebras and Leibniz algebroids.
In Section \ref{section2} we sketch the derived bracket approach to
Courant algebroids \cite{R} \cite{KW},
and we give a 
definition of irreducibility, adapted from \cite{G}.
In Section \ref{section3}, we study the properties of the Nijenhuis torsion
of a skew-symmetric endomorphism, $\N$, of a Courant algebroid, and we show
that the torsion has the usual properties of tensoriality and 
skew-symmetry in the special case where $\N^2$ 
is a scalar multiple of the identity (Theorem \ref{A3}).
In fact, on an irreducible Courant algebroid, any (skew-symmetric)
Nijenhuis tensor is proportional to a complex, paracomplex or subtangent
structure (Theorem \ref{courantirred}). 
We are thus led to a definition of `weak deforming tensors' which are
those tensors that 
generate infinitesimal deformations of Courant structures
(Theorem \ref{CNS}).

Section \ref{section4} 
deals with those Courant algebroids that are the double of a Lie
bialgebroid. In Theorem \ref{A3bialg}, 
we prove that, in the special case of an endomorphism, $N$,
of a Lie algebroid, $A$, whose square is a scalar multiple of the identity, the
torsion of the corresponding skew-symmetric endomorphism of its double
$A \oplus A^*$ is, in a suitable sense, the sum of the torsion of $N$ and
the torsion of its transpose.
Theorems \ref{biNij} and \ref{biNijconverse} are reformulations or
generalizations of results of \cite{CGM} and
\cite{nunes}.
Theorem \ref{bialg} deals with the deformation of Lie bialgebroids. 
In particular, in the case of a trivial Lie bialgebroid, a Nijenhuis
tensor on $A$ defines a weak deforming tensor for $A \oplus A^*$ 
(Theorem \ref{theoremweakdef}). 
Finally, in Section \ref{compatible}, we outline the role of
Poisson-Nijenhuis (or PN-) structures and
of  presymplectic-Nijenhuis (or $\Omega$N-) structures on a Lie
algebroid -- for which see, e.g.,
\cite{yksR} and references cited there -- 
in the deformation theory of the double of the Lie algebroid.
In Propositions \ref{PN} and \ref{OmegaN}, we prove that both
PN-structures and $\Omega$N-structures on a Lie
algebroid, $A$, define infinitesimal  
deformations of the double $A \oplus A^*$ of $A$.

The role of the Nijenhuis tensors\footnote{The early history of what
  is now called the Nijenhuis torsion of an endomorphism of the
  tangent bundle of a manifold is interesting and
  involved, and can be traced through
Paulette Libermann's article of 1955 in the {\it
  Bull. Soc. Math. France}, 
``Sur les structures presque
complexes et autres structures infinit\'esimales r\'eguli\`eres''.
Charles Ehresmann  defined the almost complex structures
in 1947 and developped their study in his address to the ICM at
Harvard, ``Sur les vari\'et\'es presque
complexes'' (1950). He defined the ``torsion''
  so that its vanishing was a necessary
  condition for these structures to be integrable, 
and he proved its vanishing to be 
also sufficient in
  the real analytic case. (In the smooth case, 
the sufficiency was proved much later,
in 1957, by Louis Nirenberg and A. Newlander.) 
This ``torsion'' was first 
defined as the torsion of a non canonically defined affine connexion
associated with the almost complex structure. It follows from the
reuslts of Libermann's doctoral thesis (1953) 
that this torsion is independent of the choice of the connexion and
that its expression in local coordinates could be given in terms of
the components
of the tensorial field itself. 
There was related work, some published and some unpublished,
by Beno Eckmann, Alfred 
Fr\"olicher, Kentaro Nomizu, as well as by Eugenio 
Calabi, Georges de Rham, 
Andr\'e Lichnerowicz and Jan A. Schouten. In her article, Libermann attributed to
Eckmann the proof that the vanishing of this
``torsion'' is equivalent to the condition 
$[X,Y] + J[JX,Y] +J [X,JY] -[JX,JY]=0$ for all tangent vector fields $X$
and $Y$. The more general
expression for the torsion of an endomorphim had been introduced by
Albert Nijenhuis in 1951 in his study of the integrability of the
distribution spanned by the eigenvectors of an endomorphism. This
torsion was later recognized as a special case of the bracket
introduced by Fr\"olicher and Nijenhuis in their articles of 1956 and 1958.}  
in the theory of Dirac pairs \cite{GD} \cite{D1987} \cite{D} 
that generalize the bihamil
tonian structures and have applications to
integrable systems \cite{BDSK}, and the theory of Dirac-Nijenhuis
structures \cite{CGM} \cite{nunes} \cite{H} 
will be the subject of further research.

\section{Nijenhuis structures on Leibniz algebras}\label{section1}

\subsection{Leibniz algebras}
A \emph{Leibniz algebra} (or \emph{Loday algebra}) 
is a vector space $L$ over a field $k$ of characteristic $0$,
equipped with a $k$-bilinear composition law, called the \emph{Leibniz bracket}, 
satisfying the Jacobi identity, 
\begin{equation}
u \circ (v \circ w) = (u \circ v) \circ w + v \circ (u \circ w),
\end{equation}
for all $u,v,w$ in $L$.
A Leibniz algebra with a skew-symmetric composition law is a Lie algebra.
The \emph{Leibniz cohomology}, which was defined by Loday \cite{L}, is
a generalization of the Chevalley-Eilenberg cohomology of
Lie algebras.

\subsection{Nijenhuis torsion}
Let $(L, \circ )$ be a Leibniz algebra. For an endomorphism $N$ of $L$, 
define 
\begin{equation}\label{axiom4}
u \circ_N v= N u  \circ v + u \circ Nv - N (u \circ v),
\end{equation}
and set 
\begin{equation}\label{5}
(T_\circ N) (u,v) = Nu \circ Nv - N(u \circ_N v).
\end{equation} 
Then 
$T_\circ N : L \times L \to
L$ is called the \emph{Nijenhuis torsion} or simply 
the \emph{torsion } of $N$, and $N$ is said to be a \emph{Nijenhuis
tensor} or a \emph{Nijenhuis structure} on $(L, \circ)$ if $T_\circ N =0$. 

\subsection{Deformations of Leibniz brackets}
A necessary and sufficient condition for $\circ_N$ to be a Leibniz bracket is  
that $T_\circ N$ be a Leibniz cocycle.
Then $\circ_N$ is a trivial infinitesimal deformation of~$\circ$. 
In particular, Nijenhuis tensors define trivial 
infinitesimal deformations of Leibniz brackets.
More precisely \cite{CGM}, 

\begin{proposition}
When $N$ is a Nijenhuis tensor on $(L, \circ)$,  

\noindent(i)
$\circ_N$ is a Leibniz bracket, 

\noindent(ii)
$N$ is a morphism of Leibniz algebras from $(L, \circ_N )$
 to $(L, \circ )$, and

\noindent(iii)
$\circ_N$ is compatible with $\circ$ in the sense that their sum
is a Leibniz bracket.
\end{proposition}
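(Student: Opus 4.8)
The plan is to prove all three parts by direct computation from the definitions (\ref{axiom4}) and (\ref{5}), using the hypothesis $T_\circ N = 0$, which can be rewritten as the identity
\begin{equation}\label{proofkey}
Nu \circ Nv = N(u \circ_N v) = N(Nu \circ v) + N(u \circ Nv) - N^2(u \circ v),
\end{equation}
valid for all $u,v \in L$. This single identity is the engine for the entire proposition, so I would record it first.

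For part (ii), the claim that $N$ is a morphism from $(L,\circ_N)$ to $(L,\circ)$ means precisely $N(u \circ_N v) = Nu \circ Nv$, which is exactly the Nijenhuis condition (\ref{proofkey}) itself. So (ii) is immediate and should be stated before (i), since it is the content of the hypothesis repackaged. For part (iii), I would observe that $\circ_N + \circ$ is itself a bracket of Nijenhuis-deformation type: one checks that $u \mathbin{(\circ_N + \circ)} v = (N+\mathrm{id})u \circ v + u \circ (N+\mathrm{id})v - (N+\mathrm{id})(u\circ v) + u \circ v$; after regrouping this equals $u \circ_{N+\mathrm{id}} v$ up to the base bracket, and the torsion $T_\circ(N+\mathrm{id})$ expands as $T_\circ N + (\text{terms linear in } N)$. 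The cleanest route is to note that $\mathrm{id}$ has vanishing torsion, that $T_\circ(N+\lambda\,\mathrm{id})$ is a polynomial in $\lambda$ whose constant and quadratic coefficients vanish, and to verify the cross term is a coboundary so that $T_\circ(N+\mathrm{id})$ is a Leibniz cocycle; then the general deformation criterion recalled just before the proposition gives that $\circ_{N+\mathrm{id}}$, hence $\circ_N + \circ$, is a Leibniz bracket.

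For part (i), the direct approach is to substitute the Nijenhuis identity (\ref{proofkey}) into the Jacobi (Leibniz) identity for $\circ_N$ and reduce everything to the Jacobi identity for the original bracket $\circ$. Concretely, I would expand $u \circ_N (v \circ_N w)$, apply (\ref{proofkey}) to collapse the inner deformed bracket, and compare with the analogous expansions of $(u \circ_N v) \circ_N w$ and $v \circ_N (u \circ_N w)$; the Nijenhuis condition lets every term involving $\circ_N$ be rewritten so that $N$ is pulled outside, after which the Leibniz identity for $\circ$ closes the computation. Alternatively, and more conceptually, part (i) follows from part (ii) together with the general criterion: since $T_\circ N = 0$ is trivially a cocycle, $\circ_N$ is a Leibniz bracket by the necessary and sufficient condition stated above the proposition.

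The main obstacle I anticipate is bookkeeping rather than any genuine conceptual difficulty: the Jacobi identity for $\circ_N$ expands into a large number of terms, each a composition of $\circ$ with various powers of $N$ applied to $u,v,w$, and one must organize the cancellation carefully so that the Nijenhuis relation (\ref{proofkey}) is applied in exactly the right places. For this reason I would prefer to deduce (i) and (iii) from the single deformation criterion (that $\circ_N$ is a Leibniz bracket iff $T_\circ N$ is a Leibniz cocycle) rather than verifying the Jacobi identity by brute force, invoking that (ii) is simply a restatement of the hypothesis; this reduces the proposition to checking that $T_\circ N = 0$ and $T_\circ(N+\mathrm{id})$ are cocycles, the former being trivial and the latter the only real computation.
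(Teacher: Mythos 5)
Your argument is essentially correct, but note that the paper itself offers no proof of this proposition: it is quoted from Cari\~{n}ena, Grabowski and Marmo \cite{CGM} (``More precisely \cite{CGM}''), so there is no internal proof to match yours against. Your reading of (ii) as a verbatim restatement of $T_\circ N=0$ is right, and deducing (i) from the criterion stated just above the proposition (that $\circ_N$ is a Leibniz bracket if and only if $T_\circ N$ is a Leibniz cocycle, the zero cochain being trivially a cocycle) is the economical route; the brute-force verification of the Jacobi identity for $\circ_N$ would also work but is exactly the bookkeeping you anticipate. For (iii) your strategy is the right one, but two details should be cleaned up. First, your displayed expansion of $u\,(\circ_N+\circ)\,v$ carries a spurious extra $u\circ v$: one has exactly $u\,(\circ_N+\circ)\,v = u\circ_{N+\mathrm{id}}v$ with no correction term, since $(N+\mathrm{id})u\circ v+u\circ(N+\mathrm{id})v-(N+\mathrm{id})(u\circ v)=u\circ_N v+u\circ v$. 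Second, no coboundary argument is needed for the cross term: a short computation shows that $T_\circ(N+\lambda\,\mathrm{id})=T_\circ N$ identically for every scalar $\lambda$ (the terms linear and quadratic in $\lambda$ cancel outright), which is precisely the observation the paper records later, in the Courant setting, in Remark \ref{remarkpaired}. Hence $N+\mathrm{id}$ is again a Nijenhuis tensor, part (i) applies to it, and $\circ+\circ_N=\circ_{N+\mathrm{id}}$ is a Leibniz bracket. With these corrections your proof is complete and self-contained.
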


\subsection{Nijenhuis structures on Leibniz algebroids}
Leibniz algebroids are generalizations of Lie algebroids in which the
Lie bracket on the space of sections is only assumed to be a Leibniz
bracket. They are defined in \cite{Poncin}, 
where they are called {\it Loday algebroids}.
The definitions of Nijenhuis torsion and Nijenhuis structures
on Leibniz algebroids are similar to those in the case of Lie algebroids, since
they involve only the bracket and do not make use of the anchors. 

For a vector bundle endomorphism, $\N : E \to E$, of a Leibniz
algebroid over a manifold $M$,
we denote by the same
letter, $\N$, the map it induces on the sections of $E$. Then we define 
the bracket $\circ_\N$ and the torsion of $\N$ by formulas \eqref{axiom4}
and \eqref{5}. 
A vector bundle endomorphism is called a \emph{Nijenhuis tensor} or a
\emph{Nijenhuis structure} if its torsion vanishes. 
 
\section{Courant algebroids}\label{section2}

\subsection{The anchor and bracket as derived brackets}
We follow the approach of Roytenberg \cite{R}. 
Let $(E,\langle ~, ~ \rangle)$ be a vector bundle
equipped with a fiberwise symmetric bilinear form.
Here
we shall assume that $ \langle
~, ~ \rangle$ is non-degenerate. (Non-degeneracy is not
assumed in the definition of Courant algebroids in \cite{B} nor in
that of
Courant-Dorfman algebras in \cite{RLMP}.)
The minimal symplectic realization of $(E,\langle ~, ~ \rangle)$ is the bundle
$\widetilde E = j^!(T^*[2]E[1])$, where $j : E[1] \to E[1] \oplus E^*[1]$
is defined by $u \mapsto (u, \frac12 \langle u, \cdot \rangle)$, and
$j^!$ denotes the pull-back by $j$.
The injective map, $j$, is such that 
$ <ju, jv> = \langle u, v \rangle $, for all
$u, v \in E$, where
$<~,~>$ is the canonical fiberwise symmetric bilinear form on $E \oplus E^*$.

Let $\A$ be the graded algebra of functions on the minimal 
symplectic realization $\widetilde E$ of $E$, equipped with its canonical 
Poisson bracket of
degree $-2$, which we denote by $\{~,~\}$. 
Then,  $\A^0= C^\infty(M) $, $\A^1 = \Gamma E$, 
and for all sections $u, v$ of $E$,
\begin{equation}
\{u,v\} = \langle u, v\rangle.
\end{equation} 

A {\it Courant algebroid} structure on a vector bundle, $E$, over a manifold $M$,
equipped with a fiberwise non-degenerate 
symmetric bilinear form, $\langle ~, ~ \rangle$,
is defined by an element $\Theta \in \A^3$ such that 
\begin{equation}
 \{\Theta, \Theta\}=0.
\end{equation} 
The anchor, $\rho$,
and 
the bracket, $[~,~]$, 
are defined by
\begin{equation}
\rho(u) f = \{\{u, \Theta\},f\},
\end{equation} 
and 
\begin{equation}
[u,v] = \{\{u, \Theta\},v\},
\end{equation} 
for all sections $u, v$ of $E$, and $f \in C^\infty(M)$.
Thus, the anchor and bracket are viewed as the
derived brackets by $\Theta$ of the
canonical Poisson bracket of $\A$ restricted to
$\A^1 \times \A^0= \Gamma E \times C^\infty(M)$ and to
$\A^1 \times \A^1= \Gamma E \times \Gamma E$, respectively.
Bracket $[~, ~]$ is a Leibniz bracket on $\Gamma E$, 
called the \emph{Dorfman bracket}. 
Courant algebroids are examples of
Leibniz algebroids. 
The \emph{Courant bracket} 
is the skew-symmetrization of the Dorfman bracket.

The operator $\d_\Theta = \{\Theta, \cdot\}$ is a cohomology operator on
$\A$. 

We shall make use of the
relations \cite{R},
\begin{equation} \label{axiom1}
[u,v] + [v,u] = \partial \langle u, v \rangle,
\end{equation}
and
\begin{equation} \label{axiom2}
\langle  [u, v], w \rangle 
+ \langle v, [u,w] \rangle = \langle u, \partial \langle v,w
\rangle\rangle ,
\end{equation}
where $\partial : C^\infty(M) \to \Gamma E$ is defined by 
\begin{equation}
\langle u,  \partial f \rangle = \rho(u) \cdot f.
\end{equation}

A vector bundle endomorphism $\phi$ of $E$ is called
{\it symmetric} if  
\begin{equation}  
\langle \phi u,v \rangle = \langle u, \phi  v \rangle, 
\end{equation} 
for all  
$u,v \in E$. This condition is written 
\begin{equation}  
\phi = \, ^t\! \phi,
\end{equation} 
where $ ^t\!\phi $ is defined by 
$\langle \phi u,v \rangle = \langle u, \, ^t\!\phi  v\rangle$, for
all sections $u,v$ of $E$.

Before defining irreducible Courant algebroids, we consider the
following properties, where $\phi$ is a vector bundle endomorphism of $E$,

\noindent($P_1$) for all sections $u$ and $v$ of $E$, 
$$
[u, \phi v] = \phi [u,v] \quad {\mathrm{and}} \quad  [\phi u, v] = \phi [u,v],
$$
\noindent($P_2$)
for all sections $u$ and $v$ of $E$, 
$$
[u, \phi v] = \phi [u,v] \quad {\mathrm{and}} \quad  \phi \, \partial
\langle u, v\rangle =\partial \langle \phi u,v \rangle,
$$
\noindent($P_1'$) 
for all sections $u$ and $v$ of $E$, 
$$
[u, \phi v] = \phi [u,v] \quad {\mathrm{and}} \quad  [\phi u, u] = \phi [u,u].
$$

From relation \eqref{axiom1} and the fact that the base field is not
of characteristic~$2$, it is easy to prove the following
lemma.
\begin{lemma}\label{lemma0} 
Let $\phi$ be a vector bundle endomorphism of $(E,\langle
  ~,~ \rangle)$.
Properties ($P_1$) and ($P_2$) are equivalent. If  $\phi$ is symmetric, 
properties ($P_1$) and ($P_1'$) are equivalent. 
\end{lemma}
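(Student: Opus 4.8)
The plan is to prove Lemma \ref{lemma0} by unwinding the two conditions in each property and using relation \eqref{axiom1} to convert between bracket identities on the two slots. Throughout, let $\phi$ be a vector bundle endomorphism of $(E,\langle~,~\rangle)$, and recall that relation \eqref{axiom1} reads $[u,v]+[v,u]=\partial\langle u,v\rangle$.

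First I would prove the equivalence of $(P_1)$ and $(P_2)$. The two properties share the first requirement, $[u,\phi v]=\phi[u,v]$ for all $u,v$, so it suffices to show that, \emph{assuming this common identity}, the second requirement of $(P_1)$, namely $[\phi u,v]=\phi[u,v]$, is equivalent to the second requirement of $(P_2)$, namely $\phi\,\partial\langle u,v\rangle=\partial\langle\phi u,v\rangle$. The idea is to apply relation \eqref{axiom1} with $u$ replaced by $\phi u$: we have $[\phi u,v]+[v,\phi u]=\partial\langle\phi u,v\rangle$. Using the common first identity on the second term, $[v,\phi u]=\phi[v,u]$, and then applying \eqref{axiom1} to $[v,u]=\partial\langle u,v\rangle-[u,v]$, I can rewrite $[\phi u,v]$ entirely in terms of $\phi[u,v]$, $\phi\,\partial\langle u,v\rangle$, and $\partial\langle\phi u,v\rangle$. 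Concretely, this gives
\begin{equation}\label{lem0-key}
[\phi u,v]-\phi[u,v]=\partial\langle\phi u,v\rangle-\phi\,\partial\langle u,v\rangle.
\end{equation}
From \eqref{lem0-key} the equivalence is immediate: the left-hand side vanishes for all $u,v$ if and only if the right-hand side does, which is exactly the stated equivalence of the remaining conditions.

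Next I would prove that, when $\phi$ is symmetric, properties $(P_1)$ and $(P_1')$ are equivalent. Here the common condition is again $[u,\phi v]=\phi[u,v]$, so the task reduces to showing that $[\phi u,v]=\phi[u,v]$ for all $u,v$ is equivalent to its polarized-diagonal form $[\phi u,u]=\phi[u,u]$ for all $u$. One direction is trivial. For the converse, I would polarize: replacing $u$ by $u+w$ in $[\phi u,u]=\phi[u,u]$ and subtracting the $u$ and $w$ instances yields the symmetrized identity $[\phi u,w]+[\phi w,u]=\phi([u,w]+[w,u])$ for all $u,w$. This is where symmetry of $\phi$ enters: combining this symmetrized identity with relation \eqref{lem0-key} applied to both orderings, and using that $\langle\phi u,w\rangle=\langle u,\phi w\rangle$ forces $\partial\langle\phi u,w\rangle=\partial\langle u,\phi w\rangle$, one can isolate the difference $[\phi u,w]-\phi[u,w]$ and show it equals its own negative under interchange, hence vanishes once the base field has characteristic different from $2$.

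The main obstacle I anticipate is the second equivalence: getting from the polarized symmetric identity back to the full unsymmetrized one $[\phi u,v]=\phi[u,v]$. Polarization only delivers the symmetric combination, and recovering the individual (non-symmetrized) identity requires genuinely using that $\phi$ is symmetric to control the antisymmetric part via \eqref{axiom1}; this is precisely the step where both the symmetry hypothesis and the characteristic $\neq 2$ assumption are indispensable, and I would take care to track signs carefully there. By contrast, the first equivalence is essentially a one-line consequence of \eqref{lem0-key} and should present no difficulty.
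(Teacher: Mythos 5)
Your proof is correct and uses exactly the ingredients the paper names --- relation \eqref{axiom1} and the assumption that the characteristic is not $2$ --- the paper itself omitting the argument as ``easy to prove.'' Your key identity $[\phi u,v]-\phi[u,v]=\partial\langle\phi u,v\rangle-\phi\,\partial\langle u,v\rangle$ (valid under the common hypothesis $[u,\phi v]=\phi[u,v]$) does all the work: it gives the first equivalence at once, and for symmetric $\phi$ its right-hand side is symmetric in $u,v$ while polarization of $(P_1')$ makes the left-hand side antisymmetric, forcing it to vanish.
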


We adopt the following definition:
\begin{definition}\label{defcourantirred}
A Courant algebroid $(E,\langle
  ~,~ \rangle)$ is irreducible if any symmetric vector bundle
endomorphism $\phi$ of $E$ satisfying property ($P_1$) above
is proportional to the identity endomorphism, ${\mathrm{Id}}_E$, of $E$.
\end{definition}

Our definition is inspired by, but different from Grabowski's
definition in \cite{G} in which the endomorphisms are not
required
to be symmetric and irreducibility is defined by means of
property ($P_1'$). However, it follows from the lemma that 
any irreducible Courant algebroid in
the sense of \cite{G} is irreducible in the
sense of Definition \ref{defcourantirred}. 

Examples of irreducible 
Courant algebroids will be given in Section \ref{subsection4.1}.

\subsection{Tensors on $E$ and functions on $\widetilde E$}
Let $(E,\langle ~, ~ \rangle)$ be a vector bundle
equipped with a fiberwise non-degenerate symmetric bilinear form.
Any tensor on $E$ can be identified with a contravariant or a 
covariant tensor using
the symmetric bilinear form, and any skew-symmetric contravariant 
or covariant tensor, $t$, 
can be identified with a function $\widetilde t$ on $\widetilde E$.
A skew-symmetric contravariant 
$k$-tensor $t$ on $E$ can be identified with a function on $E^*[1]$, 
and therefore also with a function 
$\hat t$ on $E[1]$ by means of the symmetric bilinear form. The
function 
$\widetilde t$ is the pull-back of $\hat t$ under the projection
${\widetilde E} \to E[1]$.
We shall now describe these identifications by means of local coordinates.

Let $(e_a)$ be a local basis of sections of $E$ such that 
$\langle e_a, e_b \rangle$ is constant. Set $g_{ab}=
 \langle e_a, e_b \rangle$. If
$(q^i, \tau^a, p_i, \theta_a)$ are local
canonical coordinates on $T^*[2]E[1]$, then 
$(q^i, \tau^a, p_i)$ are local
coordinates on $\widetilde E$. Since, under the map $j$, $\theta_a = \frac12
 g_{ab} \tau^b$, the non-vanishing Poisson brackets of these coordinates are 
\begin{equation}  
\{q^i, p_j\} = \delta^{i}_j \quad {\mathrm{and}} \quad \{\tau^a,
\tau^b\} = g^{ab}.
\end{equation} 

To a skew-symmetric contravariant 
$k$-tensor, $t = t^{a_1a_2 \ldots a_k} e_{a_1} e_{a_2} \ldots
  e_{a_k}$, there corresponds
\begin{equation} 
\widetilde t = {\frac{1}{k!}} t^{a_1a_2 \ldots a_k}
  g_{a_1b_1}g_{a_2b_2} \ldots g_{a_kb_k}
  \tau^{b_1}\tau^{b_2}\ldots\tau^{b_k} \in \A^k.
\end{equation}
When no confusion can arise, we shall sometimes write $t$ for
$\widetilde t$.
For instance, a section $u=u^ae_a$ of $E$ is identified with the function
${\widetilde u} = g_{ab} u^a\tau^b \in \A^1$.

Let $\N: E \to E$ be an endomorphism of the vector bundle $E$ which preserves 
$\langle ~, ~ \rangle$ infinitesimally,
i.e., such that 
\begin{equation}  
\langle \N u,v \rangle + \langle u, \N  v \rangle = 0, 
\end{equation} 
for all  
$u,v \in E$. This condition is written 
\begin{equation}  
\N + \, ^t\! \N =0.
\end{equation} 
Such a map is called 
\emph{infinitesimally orthogonal} or \emph{skew-symmetric}.

\begin{remark}\rm{
Other authors \cite{CGM} \cite{G} \cite{A} call 
these endomorphisms \emph{orthogonal}. In fact, when $\N^2=\lambda \,
{\mathrm{Id}}_E$, condition $\N  \, ^t\! \N ={\mathrm{Id}}_E$ is equivalent to
$\N  -  \lambda \, ^t\! \N = 0$. In particular, for a generalized almost
complex structure, $\N^2= - {\mathrm{Id}}_E$, and the 
conditions $\N \, ^t\! \N ={\mathrm{Id}}_E$ and $\N + \, ^t\! \N =0$
are equivalent.}
\end{remark}

In local coordinates, if $\N (e_a)  = \N ^b_a
e_b $, the condition $\N + \, ^t\! \N =0$ is $\N ^b_a g_{bc} + \N ^b_c
g_{ba} = 0$. 
When it is satisfied, 
the associated contravariant tensor, with components $\N ^a_c g^{cb}$, is
skew-symmetric, and
$\widetilde \N  = \frac12 \N ^b_a g_{bc} \tau^a \tau^c \in \A^2$.
A short computation shows that 
\begin{equation}\label{Nu} 
{\N (u)} = \{u, \widetilde \N \},
\end{equation} 
for all sections $u$ of $E$. In fact, when $\N$ is a skew-symmetric
endomorphism, ${\widetilde \N}$ 
is the unique element in $\A^2$
satisfying \eqref{Nu}.
See \cite{Ath} for more general results on the correspondence between
tensors on $E$ and functions on $\widetilde E$.

\section{Nijenhuis and deforming tensors on Courant algebroids}\label{section3}

Let $(E, \langle ~, ~ \rangle, \Theta)$ be a Courant
algebroid over a manifold $M$, where $ \langle
~, ~ \rangle$ is the fiberwise non-degenerate symmetric bilinear form,  
and $\Theta \in \A^3$ determines the anchor, $\rho$, 
and the Leibniz bracket on sections, $[~,~]$.

\subsection{Nijenhuis torsion}

In what follows, we shall assume that $\N: E \to E$ is a skew-symmetric
vector bundle endomorphism.  This is a natural assumption
since skew-symmetry means that $\N$
leaves  $\langle  ~,~ \rangle$ infinitesimally invariant. 
As above, we define 
\begin{equation}\label{defbracket}
[u,v]_\N = [\N u,v]+[u,\N v]- \N [u,v].
\end{equation}

\begin{lemma} In terms of the Poisson bracket of $\A$,
 \begin{equation}\label{lemma1}
[u,v]_\N = \{\{{u}, \{\widetilde{\N}, \Theta\}\}, {v}\},
\end{equation}
for all $u, v \in \Gamma E \simeq  \A^1$.
\end{lemma}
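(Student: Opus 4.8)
The plan is to express everything on the right-hand side of \eqref{lemma1} in terms of the derived-bracket definitions already established, and then recognize the result as the deformed bracket \eqref{defbracket}. The key identities at my disposal are the definition of the Dorfman bracket, $[u,v] = \{\{u,\Theta\},v\}$, and the defining property \eqref{Nu} of $\widetilde\N$, namely $\N(u) = \{u,\widetilde\N\}$, valid for all sections $u$ of $E$. I would work throughout in the Poisson algebra $\A$, using the graded Jacobi identity for the bracket $\{~,~\}$ of degree $-2$ together with the degree bookkeeping: $u,v\in\A^1$, $\widetilde\N\in\A^2$, and $\Theta\in\A^3$.

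First I would unpack the inner expression $\{\widetilde\N,\Theta\}$ and feed it into the outer brackets. The natural strategy is to apply the graded Jacobi identity to the double bracket $\{\{u,\{\widetilde\N,\Theta\}\},v\}$ so as to redistribute $\widetilde\N$ and $\Theta$ and produce three groups of terms. Concretely, I would first rewrite $\{u,\{\widetilde\N,\Theta\}\}$ using Jacobi as $\{\{u,\widetilde\N\},\Theta\} + \{\widetilde\N,\{u,\Theta\}\}$ (up to the correct signs dictated by the degrees), recognizing $\{u,\widetilde\N\} = \N u$ via \eqref{Nu}. This should convert the first piece into something of the form $\{\{\N u,\Theta\},\cdot\}$, i.e. a Dorfman bracket with $\N u$ in the first slot, yielding the term $[\N u, v]$ after taking the outer bracket with $v$.

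Next I would handle the remaining piece $\{\widetilde\N,\{u,\Theta\}\}$ inside the outer bracket with $v$, that is $\{\{\widetilde\N,\{u,\Theta\}\},v\}$. Applying Jacobi once more to move $\widetilde\N$ past the outer structure, I expect to generate two further contributions: one in which $\widetilde\N$ acts on $v$, producing (via \eqref{Nu}) the term $[u,\N v]$, and one in which $\widetilde\N$ acts on the whole bracket $\{\{u,\Theta\},v\} = [u,v]$, producing $-\N[u,v]$. Collecting the three terms $[\N u,v] + [u,\N v] - \N[u,v]$ gives exactly $[u,v]_\N$ by \eqref{defbracket}, completing the proof.

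The main obstacle will be sign and degree bookkeeping: the Poisson bracket has degree $-2$ and is graded-symmetric/antisymmetric according to the parities of its arguments, so each application of the graded Jacobi identity carries signs determined by the degrees of $u$ (degree $1$), $v$ (degree $1$), $\widetilde\N$ (degree $2$), and $\Theta$ (degree $3$). I would need to verify that these signs conspire to give precisely the minus sign in front of $\N[u,v]$ and plus signs on the other two terms, and that no extraneous terms (for instance, terms where $\widetilde\N$ or $\Theta$ brackets with a function rather than a section) survive. A subsidiary point to confirm is that the identity \eqref{Nu}, stated for sections $u\in\A^1$, may legitimately be applied to the argument $\{u,\Theta\}\in\A^2$ as well; here one must instead use that the bracket with $\widetilde\N$ realizes the induced action of $\N$ as a derivation, so that $\{\widetilde\N,[u,v]\} = \N[u,v]$ holds when $[u,v]$ is regarded in $\A^1$. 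Once the graded signs are pinned down, the computation is short and purely formal.
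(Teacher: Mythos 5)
Your proposal is correct and follows exactly the route the paper takes: the paper's proof consists precisely of the observation that \eqref{lemma1} follows from the graded Jacobi identity for $\{~,~\}$ together with \eqref{Nu} and the derived-bracket definition of $[~,~]$, as in the analogous Lie algebroid computation. The sign bookkeeping you flag does work out (the term where $\widetilde\N$ brackets with $[u,v]=\{\{u,\Theta\},v\}\in\A^1$ acquires the minus sign, and \eqref{Nu} applies there since the argument is a section), so nothing is missing.
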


\begin{proof}
The proof is an application of the Jacobi identity for the Poisson
bracket, formally identical to the proof of the analogous formula for
Lie algebroids. See, e.g., lemma 2 of \cite{yksR}.
\end{proof}

We now define the \emph{Nijenhuis torsion}, or simply the \emph{torsion},
$T_\Theta\N$, of $\N$, as in \eqref{5}, by
\begin{equation}\label{deftorsion}
(T_\Theta\N) (u,v) =  [\N u, \N v] - \N [u,v]_\N,
\end{equation}
for all sections $u, v$ of $E$. A skew-symmetric endomorphism 
whose torsion vanishes
is called a \emph{Nijenhuis tensor}. 

\begin{remark}\label{remarkpaired}
\rm{For an endomorphism that is not skew-symmetric, the
torsion can still be defined by \eqref{defbracket} and 
\eqref{deftorsion}, and we observe
that, if $\N'= \N + \kappa \, {\rm{Id}}_E$, where $\kappa$ is a scalar, then
$T_\Theta(\N') = T_\Theta\N$. 
Thus, those properties of the torsion that are
proved under the assumption that $\N$ is skew-symmetric but whose
proof does not utilize the Poisson bracket are also valid for
endomorphisms $\N' = \N + \kappa \, {\rm{Id}}_E$, which 
are characterized by the condition $\N' + \, ^t\!\N' = 2\kappa \,
{\rm{Id}}_E $. Such endomorphisms are called \emph{paired} in \cite{CGM}.}
\end{remark}

\begin{remark}\rm{
One can also define the torsion $T_C\N$ of an
endomorphism $\N$ with respect to the Courant
bracket, $[~,~]_C$, replacing the Dorfman bracket by its
skew-symmetrization in the preceding formulas. 
The relation between the two torsions is
\begin{equation}\label{onehalf}
(T_C\N)(u,v) = \frac12 ((T_\Theta\N)(u,v) - (T_\Theta\N)(v,u)),
\end{equation}
while, for a skew-symmetric tensor $\N$,
\begin{equation}\label{16}
\begin{array}{lll}
&   & (T_C\N)(u,v)  -  (T_\Theta\N) (u,v)\\
& = &  \frac12 \left(- \partial  \langle \N u, \N v \rangle  
+ \N \partial \langle \N u, v \rangle + \N \partial \langle u, \N v
\rangle
- \N^2 \partial  \langle u, v \rangle \right)\\
& = &
 \frac12 \left( \partial  \langle  u, \N^2 v \rangle  
- \N^2 \partial  \langle u, v \rangle \right).
 \end{array}
\end{equation}
If $\N^2$ is a
scalar multiple of the identity of $E$, both torsions, $T_\Theta\N$ and $T_C\N$, coincide.}
\end{remark}

\subsection{Properties of the torsion}
For ease of exposition, we introduce the following definition from
\cite{V} (see also \cite{A}),
where `cps' stands for `complex, paracomplex or subtangent'.
\begin{definition}
A skew-symmetric endomorphism $\N$ of a Courant algebroid~$E$ such that 
$\N^2 = \lambda \, \mathrm{Id}_E$, where 
$ \lambda = - 1, \, 1$ or $0$, is called a \emph{generalized almost
  cps structure} on $E$. A generalized almost cps structure 
is called a \emph{generalized cps structure} if its
torsion vanishes.
\end{definition}

When $(E, \langle ~, ~ \rangle, \Theta)$ 
is a Courant algebroid, the torsion $T_\Theta\N$ of 
a skew-symmetric endomorphism $\N$ of $E$ 
is a map from $\Gamma E \times \Gamma E$ to
$\Gamma E$. 
Unlike the case of Lie algebroids, $T_\Theta\N$ is not in general 
$C^\infty(M)$-linear in both arguments, and in general not
skew-symmetric. 

\medskip

\noindent{\bf Linearity.}
It is clear that 
\begin{equation}
[u,fv]  =  f [u,v] + (\rho(u) \cdot f) v
\end{equation}
and 
\begin{equation}
[ f u , v ] = - [v,f u] + \partial \langle f u, v \rangle =
f [u,v] - (\rho(v) \cdot f) u + \langle u, v \rangle \partial f.
\end{equation}
Whence,
\begin{equation}
({T}_\Theta{\N})(u,fv) = f ({T}_\Theta{\N})(u,v), 
\end{equation}
and
\begin{equation}\label{17}
({T}_\Theta{\N})(fu,v) = f ({T}_\Theta{\N})(u,v) +
\langle u, v \rangle
\N^2(\partial f) -  \langle  u, \N^2 v \rangle \partial f.
\end{equation}
In fact, since $\N$ is skew-symmetric,
\begin{equation*}
\begin{array}{lll}
& & ({T}_\Theta{\N})(fu,v)  - f ({T}_\Theta{\N})(u,v)\\
& = & (\langle \N u,  v \rangle + \langle  u, \N v \rangle) \partial f
+  \langle \N u, \N v \rangle \partial f +  \langle u, v \rangle
\N^2(\partial f) \\
& = &  \langle u, v \rangle
\N^2(\partial f) -  \langle  u, \N^2 v \rangle \partial f.
\end{array}
\end{equation*}

\noindent{\bf Skew-symmetry.}
Again using the fact that $\N$ is skew-symmetric, we obtain
\begin{equation}\label{18}
({T}_\Theta{\N})(u,v)  +  ({T}_\Theta{\N})(v,u) \\
 =
\N^2 \partial \langle  u, v \rangle
-  \partial \langle  u, \N^2 v \rangle  
\end{equation}
since 
$$
({T}_\Theta{\N})(u,v)  +  ({T}_\Theta{\N})(v,u) =
\partial \langle \N u,  \N v \rangle - \N 
(\partial \langle  \N u,  v \rangle
+ \partial \langle  u,  \N v \rangle)
+ \N^2 \partial \langle  u, v \rangle.
$$

\begin{remark}
\rm{Equation \eqref{18} can alternatively be derived from 
\eqref{onehalf} and \eqref{16}.}
\end{remark}

\noindent{\bf Associated $3$-tensor.}
In order to determine whether ${T}_\Theta{\N}$ determines a
skew-symmetric covariant $3$-tensor, we use 
the skew-symmetry of $\N$  and relation \eqref{axiom2} to obtain 
\begin{equation}\label{20} 
\langle ({T}_\Theta{\N})(u,v), w \rangle
+ \langle ({T}_\Theta{\N})(u,w), v \rangle
= \langle  \N^2 [u, w] - [u, \N^2w], v \rangle .
\end{equation}

Equations \eqref{17}, \eqref{18} and \eqref{20} show that, when 
$\N^2 = \lambda \, \mathrm{Id}_E$, the torsion ${T}_\Theta{\N}$ of
$\N$ is
$C^\infty(M)$-linear in both arguments and skew-symmetric, and 
defines a skew-symmetric covariant $3$-tensor, 
$\widetilde{{T}_\Theta{\N}}$, on $E$ by
\begin{equation}
\widetilde{{T}_\Theta{\N}}(u,v,w) 
= \langle ({T}_\Theta{\N})(u,v), w \rangle. 
\end{equation}
More precisely,

\begin{theorem}\label{A3}
Assume that $\N$ is proportional to a generalized almost cps
structure on a Courant
algebroid, $(E, \langle ~, ~ \rangle, \Theta)$.

\noindent (i) The torsion, ${T}_\Theta{\N}$, of $\N$ is
$C^\infty(M)$-linear in both arguments and skew-symmetric, and it 
defines an element $\widetilde{{T}_\Theta{\N}} 
\in \A^3$.

\noindent (ii) For all sections $u,v$ of $E$,
\begin{equation}\label{torsion0}
{{({T}_\Theta{\N})(u,v)}} =
  \{\{{u},\widetilde{{T}_\Theta{\N}}\},
{v}\}.
\end{equation}

\noindent (iii) Set $\N^2 = \lambda \,
\mathrm{Id}_E$, for a real number $\lambda$. Then
\begin{equation}\label{torsioncps} 
\widetilde{{T}_\Theta{\N}} = -
\frac12 (\{\{{\widetilde \N}, \Theta\},{\widetilde \N}\} + \lambda
\Theta).
\end{equation}
\end{theorem}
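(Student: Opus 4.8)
The plan is to prove the three parts in order, using the Poisson-algebraic identity \eqref{lemma1} as the main computational engine.

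For part (i), I would invoke the already-established equations \eqref{17}, \eqref{18} and \eqref{20} directly. The hypothesis that $\N$ is proportional to a generalized almost cps structure means $\N^2 = \lambda \, \mathrm{Id}_E$ for some scalar $\lambda$ (by Remark \ref{remarkpaired}, proportionality does not affect the torsion, so we may as well assume this normalization). Substituting $\N^2 = \lambda \, \mathrm{Id}_E$ into \eqref{17} collapses the obstruction terms: the right-hand side becomes $f({T}_\Theta\N)(u,v) + \lambda \langle u,v\rangle \partial f - \lambda \langle u, v\rangle \partial f = f({T}_\Theta\N)(u,v)$, giving $C^\infty(M)$-linearity in the first argument; linearity in the second is already unconditional. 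Similarly \eqref{18} gives $({T}_\Theta\N)(u,v) + ({T}_\Theta\N)(v,u) = \lambda\,\partial\langle u,v\rangle - \lambda\,\partial\langle u,v\rangle = 0$, so the torsion is skew-symmetric. Then \eqref{20} shows that $\langle({T}_\Theta\N)(u,v),w\rangle$ is skew in $v,w$, and combined with the skew-symmetry in $u,v$ this yields a fully skew-symmetric covariant $3$-tensor, identified with $\widetilde{{T}_\Theta\N}\in\A^3$ as in the paragraph preceding the theorem.

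For part (ii), I would use the tensoriality established in (i) together with the general correspondence between skew-symmetric covariant $3$-tensors and elements of $\A^3$. Since $\widetilde{{T}_\Theta\N}\in\A^3$ is the function associated to the $3$-tensor $(u,v,w)\mapsto\langle({T}_\Theta\N)(u,v),w\rangle$, the defining property of this correspondence (analogous to \eqref{Nu}, and to the derived-bracket formula $[u,v]=\{\{u,\Theta\},v\}$) gives $\langle\{\{u,\widetilde{{T}_\Theta\N}\},v\},w\rangle = \langle({T}_\Theta\N)(u,v),w\rangle$ for all $w$; non-degeneracy of $\langle~,~\rangle$ then yields \eqref{torsion0}. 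The key point is simply that taking two Poisson brackets of a function in $\A^3$ with two sections in $\A^1$ recovers the associated bilinear map with values in $\Gamma E$.

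Part (iii) is the computational heart and the main obstacle. The strategy is to compute $\{\{u,\widetilde{{T}_\Theta\N}\},v\}$ by expanding the right-hand side of \eqref{torsioncps} using the Poisson bracket, and match it to the intrinsic formula \eqref{deftorsion} for the torsion rewritten via \eqref{lemma1}. Concretely, I would start from the definition $(T_\Theta\N)(u,v)=[\N u,\N v]-\N[u,v]_\N$, express every term through the Poisson bracket — using $\N u = \{u,\widetilde\N\}$ from \eqref{Nu}, the derived bracket $[a,b]=\{\{a,\Theta\},b\}$, and \eqref{lemma1} for $[u,v]_\N$ — and repeatedly apply the Jacobi and graded-Leibniz identities to move all brackets with $\Theta$ and $\widetilde\N$ into the combination $\{\{\widetilde\N,\Theta\},\widetilde\N\}$. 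The appearance of the term $\lambda\Theta$ is forced by the condition $\N^2 = \lambda\,\mathrm{Id}_E$: whenever $\N$ is applied twice, one gets a factor $\{\cdot,\widetilde\N\}$ applied twice, which by the skew-symmetry and the relation $\{\{u,\widetilde\N\},\widetilde\N\} = $ (the function encoding $\N^2 u = \lambda u$) produces terms proportional to $\lambda$. The hard part will be the careful bookkeeping of the numerical coefficient $-\tfrac12$ and the sign and coefficient of $\lambda\Theta$; I expect that tracking the symmetrizations (the torsion is skew in $u,v$ while the naive expansion is not) is what pins down the factor $\tfrac12$, and that the $\lambda\Theta$ term arises precisely from collecting all the double-$\widetilde\N$ contractions. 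Throughout, the graded Poisson bracket has degree $-2$, so $\{\widetilde\N,\Theta\}\in\A^3$ and $\{\{\widetilde\N,\Theta\},\widetilde\N\}\in\A^3$, confirming that both sides of \eqref{torsioncps} lie in $\A^3$ as required.
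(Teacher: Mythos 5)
Your proposal follows the paper's proof essentially verbatim: part (i) is obtained by specializing equations \eqref{17}, \eqref{18} and \eqref{20} to $\N^2=\lambda\,\mathrm{Id}_E$, and parts (ii) and (iii) are reduced to \eqref{Nu}, \eqref{lemma1} and the Jacobi identity for $\{~,~\}$, which is precisely the paper's own (one-sentence) argument. Two minor caveats: like the paper, you leave the coefficient bookkeeping in \eqref{torsioncps} unexecuted, and your appeal to Remark \ref{remarkpaired} in part (i) is unnecessary, since $\N$ being proportional to a generalized almost cps structure already yields $\N^2=\lambda\,\mathrm{Id}_E$ directly from the definition (that remark concerns adding $\kappa\,{\rm{Id}}_E$, not rescaling).
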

\begin{proof}
Formulas \eqref{torsion0} and \eqref{torsioncps} follow from
\eqref{Nu} and \eqref{lemma1}, and
the use of the Jacobi identity for $\{~,~\}$.
\end{proof}

In addition, in view of Definition \ref{defcourantirred} and Lemma
\ref{lemma0}, 
from relations \eqref{20} and \eqref{18}, we obtain immediately,
\begin{theorem}\label{courantirred}
 If $\N$ is a Nijenhuis tensor on $E$, then 
$  \N^2 [u, v] = [u, \N^2v]$ and $\N^2 \partial \langle  u, v \rangle
= \partial \langle  u, \N^2 v \rangle$, for all sections $u,v$ of $E$.
If $E$ is irreducible, any 
Nijenhuis tensor on $E$ is proportional to a generalized cps structure.
\end{theorem}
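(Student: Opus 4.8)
The plan is to prove Theorem \ref{courantirred} as a direct corollary of the two structural identities \eqref{18} and \eqref{20} established in the preceding discussion, combined with Lemma \ref{lemma0} and Definition \ref{defcourantirred}. First I would observe that the Nijenhuis hypothesis $T_\Theta\N = 0$ makes the left-hand sides of both \eqref{18} and \eqref{20} vanish identically. Setting the left side of \eqref{18} to zero yields $\N^2\partial\langle u,v\rangle = \partial\langle u, \N^2 v\rangle$ for all sections $u,v$, and setting the left side of \eqref{20} to zero gives $\langle \N^2[u,w] - [u,\N^2 w], v\rangle = 0$ for all $u,v,w$. Since the bilinear form $\langle~,~\rangle$ is non-degenerate, the second identity forces $\N^2[u,w] = [u,\N^2 w]$ for all sections $u,w$. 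This establishes the first assertion of the theorem with essentially no computation.

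For the second assertion, the key point is to recognize that the two identities just derived are precisely the two conditions appearing in property ($P_2$) of the excerpt, applied to the endomorphism $\phi = \N^2$. Indeed, $\N^2[u,w] = [u,\N^2 w]$ is the first condition of ($P_2$), and $\N^2\partial\langle u,w\rangle = \partial\langle u, \N^2 w\rangle$ is the second; the latter equals $\partial\langle \N^2 u, w\rangle$ because $\N^2 = -\,^t\!\N\,\N$ is symmetric (as $\N$ is skew-symmetric, $^t(\N^2) = (^t\N)^2 = (-\N)^2 = \N^2$). So $\N^2$ is a symmetric endomorphism satisfying ($P_2$). By Lemma \ref{lemma0}, properties ($P_1$) and ($P_2$) are equivalent, hence $\N^2$ satisfies ($P_1$) as well.

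Now I would invoke the irreducibility hypothesis. Since $E$ is irreducible in the sense of Definition \ref{defcourantirred}, any symmetric endomorphism satisfying ($P_1$) is proportional to $\mathrm{Id}_E$. Applying this to the symmetric endomorphism $\N^2$, we conclude $\N^2 = \lambda\,\mathrm{Id}_E$ for some scalar function $\lambda$. By rescaling $\N$ one reduces $\lambda$ to $-1$, $0$, or $1$, so $\N$ is proportional to a generalized almost cps structure; since its torsion vanishes by hypothesis, it is proportional to a generalized cps structure, as claimed.

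I expect the proof to be genuinely short, so the main subtlety is not a computational obstacle but a conceptual bookkeeping one: ensuring that $\N^2$ really is symmetric (so that ($P_2$) rather than merely its first half applies) and that the second half of ($P_2$) matches the derived identity after using symmetry. The verification that $^t(\N^2) = \N^2$ from $^t\N = -\N$ is the one algebraic fact that must be checked explicitly, and the rewriting $\partial\langle u,\N^2 v\rangle = \partial\langle \N^2 u, v\rangle$ via this symmetry is what aligns our identity with the exact form of condition ($P_2$). Once those are in place, the appeal to Lemma \ref{lemma0} and Definition \ref{defcourantirred} is immediate.
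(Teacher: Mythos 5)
Your proposal is correct and follows exactly the route the paper intends: the paper derives the theorem ``immediately'' from relations \eqref{18} and \eqref{20}, Lemma \ref{lemma0}, and Definition \ref{defcourantirred}, which is precisely your argument (including the observation that $\N^2$ is symmetric so that ($P_2$) applies and Lemma \ref{lemma0} upgrades it to ($P_1$)). You simply spell out the details the paper leaves implicit.
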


Formula \eqref{torsioncps} was first stated in 
corollary 3 of \cite{G}.
A result equivalent to Theorem \ref{courantirred} was proved in \cite{CGM}
(theorem 5). 

\subsection{Deformations of Courant algebroids}
As above, we shall consider skew-symmetric endomorphisms
$\N$ of $(E, \langle  ~,~ \rangle)$ exclusively. 
In fact, tensors with vanishing
Nijenhuis torsion do not in general define trivial 
infinitesimal deformations of the Dorfman bracket
of a Courant algebroid, unless they have additional properties such
as being proportional to a generalized almost cps structure. We are 
thus led to introduce the following definitions.
\begin{definition}\label{weakdef} 
A skew-symmetric endomorphism
$\N$ of a Courant algebroid $(E, \langle  ~,~ \rangle, \Theta)$
is called a

(i) \emph{weak deforming tensor} for $\Theta$ if $\{\{{\widetilde \N},
\Theta\},{\widetilde \N}\}$ is a $\d_\Theta$-cocycle,

(ii) \emph{deforming tensor} for $\Theta$ 
if $\{\{{\widetilde \N},
\Theta\},{\widetilde \N}\}$ is a scalar multiple of $\Theta$,
\end{definition} 

This terminology is justified by the fact that,
because $\d_\Theta \Theta = \{\Theta, \Theta \} = 0$, 
any deforming tensor is a weak deforming tensor.
Theorem \ref{CNS} below is further justification for the terms that we have
introduced.

\begin{theorem}\label{CNS}
Let $\N$ be a skew-symmetric endomorphism of a Courant algebroid $(E,
\langle  ~,~ \rangle, \Theta)$. Then 
$\{{\widetilde \N},\Theta\}$ is a Courant algebroid structure on $(E,
\langle  ~,~ \rangle)$ if and only if ${\N}$ is 
a weak deforming tensor for $\Theta$.
\end{theorem}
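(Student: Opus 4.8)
The plan is to show that $\Theta' := \{\widetilde\N, \Theta\}$ defines a Courant algebroid structure precisely when it satisfies the master equation $\{\Theta', \Theta'\} = 0$, and to reduce this condition to the weak deforming property via a direct computation in the Poisson algebra $\A$. First I would verify that $\Theta' = \{\widetilde\N, \Theta\}$ is an element of $\A^3$: since $\widetilde\N \in \A^2$ and $\Theta \in \A^3$, and the Poisson bracket has degree $-2$, the bracket $\{\widetilde\N, \Theta\}$ indeed lies in $\A^{2+3-2} = \A^3$, so it is a candidate for a Courant structure. By the definition recalled in Section~\ref{section2}, the only remaining condition for $\Theta'$ to define a Courant algebroid structure on $(E, \langle~,~\rangle)$ is that it be a solution of the master equation $\{\Theta', \Theta'\} = 0$.

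The core of the proof is therefore to expand $\{\Theta', \Theta'\} = \{\{\widetilde\N, \Theta\}, \{\widetilde\N, \Theta\}\}$ and relate it to $\d_\Theta \{\{\widetilde\N, \Theta\}, \widetilde\N\}$. The key tool is the graded Jacobi identity for the Poisson bracket of degree $-2$ on $\A$, together with $\{\Theta, \Theta\} = 0$ (which says $\Theta$ is a Courant structure) and the fact that $\d_\Theta = \{\Theta, \cdot\}$ is a cohomology operator. I would repeatedly apply the Jacobi identity to move $\Theta$ past $\widetilde\N$, using that $\widetilde\N$ has even degree so no extra signs intervene in the symmetric bracket $\{\widetilde\N, \widetilde\N\}$-type terms. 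The aim is to establish an identity of the schematic form
\begin{equation}
\{\{\widetilde\N, \Theta\}, \{\widetilde\N, \Theta\}\} = c \, \{\Theta, \{\{\widetilde\N, \Theta\}, \widetilde\N\}\} = c \, \d_\Theta\!\left(\{\{\widetilde\N, \Theta\}, \widetilde\N\}\right),
\end{equation}
for some nonzero constant $c$, where the middle expression is obtained by commuting the outer $\Theta$ through. Once this is in hand, the equivalence is immediate: $\{\Theta', \Theta'\} = 0$ holds if and only if $\d_\Theta\!\left(\{\{\widetilde\N, \Theta\}, \widetilde\N\}\right) = 0$, which is exactly the statement that $\{\{\widetilde\N, \Theta\}, \widetilde\N\}$ is a $\d_\Theta$-cocycle, i.e. that $\N$ is a weak deforming tensor by Definition~\ref{weakdef}(i).

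The main obstacle I anticipate is bookkeeping the graded signs correctly when applying the Jacobi identity, and in particular confirming that the constant $c$ is nonzero so that the equivalence goes in both directions rather than merely giving one implication. The Poisson bracket here is graded-symmetric of degree $-2$, so the sign conventions differ from the ordinary antisymmetric case, and I would need to track the parities of $\widetilde\N$ (degree $2$, even) and $\Theta$ (degree $3$, odd) carefully through each bracket rearrangement. A clean way to organize this is to compute $\d_\Theta \{\{\widetilde\N, \Theta\}, \widetilde\N\}$ directly by Leibniz-expanding $\d_\Theta$ across the two factors, using $\d_\Theta \widetilde\N = \{\Theta, \widetilde\N\} = \pm\{\widetilde\N, \Theta\} = \pm\Theta'$ and $\d_\Theta \Theta' = \{\Theta, \{\widetilde\N, \Theta\}\}$; the latter, by Jacobi and $\{\Theta,\Theta\}=0$, collapses so that the surviving term reproduces $\{\Theta', \Theta'\}$ up to the constant. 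This makes the nonvanishing of $c$ transparent and avoids any reliance on the decomposition \eqref{torsioncps}, which requires the additional hypothesis $\N^2 = \lambda\,\mathrm{Id}_E$ and so is not available in the generality of this theorem.
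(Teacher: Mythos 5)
Your proposal is correct and follows essentially the same route as the paper: the paper's entire proof is the Jacobi-identity computation $\{\{\widetilde\N,\Theta\},\{\widetilde\N,\Theta\}\} = \{\Theta,\{\{\widetilde\N,\Theta\},\widetilde\N\}\}$ (so your constant $c$ equals $1$; the key inputs are $\{\Theta,\Theta\}=0$, hence $\d_\Theta\{\widetilde\N,\Theta\}=0$, exactly as you outline), after which the equivalence with the cocycle condition is immediate.
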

\begin{proof} The theorem follows from the fact that, by the Jacobi
identity,
$$ \{\{{\widetilde \N},
\Theta\},
\{{\widetilde \N},
\Theta\} \} = \{\Theta, \{\{{\widetilde \N},
\Theta\},{\widetilde \N}\}\},
$$
so $\{{\widetilde \N}, \Theta\}$ is a Courant algebroid structure on
$(E, \langle  ~,~ \rangle )$
if and only if $\{\{{\widetilde \N},
\Theta\},{\widetilde \N}\}$ is a $\d_\Theta$-cocycle.
\end{proof}

\begin{remark}\rm{
The condition $\{\{{\widetilde \N},
\Theta\},{\widetilde \N}\} = 0$ is sufficient for $\{{\widetilde \N},
\Theta\}$ to be 
a Courant algebroid structure. It expresses the vanishing of the Maurer-Cartan
element $[{\widetilde \N},
{\widetilde \N}]^\Theta$ in the differential graded Leibniz-Poisson algebra
$(\A, [~,~]^\Theta, 0)$, where $[~,~]^\Theta$ is the derived bracket
of the Poisson bracket $\{~,~\}$ by the odd interior 
derivation of square $0$, $\{\Theta,\cdot\}$.}
\end{remark}

When $\N$ is a weak deforming tensor, the
Courant algebroid structure $\{{\widetilde \N}, \Theta\}$ on 
$(E, \langle ~, ~ \rangle)$ is compatible
with $\Theta$. In fact $\{\Theta + \{{\widetilde \N}, \Theta\}, \Theta
+ \{{\widetilde
  \N}, \Theta\}\}$ vanishes.

\medskip

Now, the condition $\{\Theta, T_\Theta\N\}=0$ makes sense only if
$T_\Theta\N$ is an element of $\A^3$. If $E$ is irreducible this is
the case if and only if $\N$ is proportional to a generalized almost
cps structure, whence the following definition.

\begin{definition}\label{weaknij} 
If $\N$ is proportional to a generalized almost cps structure and if 
$T_\Theta \N$ is a $\d_\Theta$-cocycle, $\N$ 
is called a \emph{weak Nijenhuis tensor}. 
\end{definition} 

In the case of tensors proportional to a generalized almost cps structure, 
we observe the following implications and equivalence: 

\noindent $\bullet$ 
A Nijenhuis tensor is a weak Nijenhuis tensor.

\noindent $\bullet$ A Nijenhuis tensor is a deforming
tensor, and therefore also a weak deforming tensor.

\noindent $\bullet$ A tensor is weak Nijenhuis if and only if it is
weak deforming. 

$$
\begin{matrix}
\mathrm{Nijenhuis} & \Rightarrow & \mathrm{weak \, \, Nijenhuis}\\

\Downarrow & & \Updownarrow\\

\mathrm{deforming} & \Rightarrow & \mathrm{weak \, \, deforming}
\end{matrix}
$$

We can now state a corollary of Theorem \ref{CNS} 
concerning the special case of those endomorphisms 
whose square is a scalar multiple
of the identity.
\begin{corollary}\label{corollaryweak}
Let $\N$ be a skew-symmetric endomorphism of a Courant algebroid $(E,
\langle  ~,~ \rangle, \Theta)$, proportional to a generalized almost cps
structure. 
Then 
$\{{\widetilde \N},\Theta\}$ is a Courant algebroid structure on $(E,
\langle  ~,~ \rangle)$ if and only if ${\N}$ is 
a weak Nijenhuis tensor.
\end{corollary}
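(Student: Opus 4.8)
The plan is to derive Corollary \ref{corollaryweak} directly from Theorem \ref{CNS} by showing that, under the additional hypothesis that $\N$ is proportional to a generalized almost cps structure, the two conditions ``$\N$ is a weak deforming tensor'' and ``$\N$ is a weak Nijenhuis tensor'' coincide. Since Theorem \ref{CNS} already asserts that $\{\widetilde\N,\Theta\}$ is a Courant algebroid structure if and only if $\N$ is a weak deforming tensor, the entire task reduces to establishing this equivalence of the two notions in the special case at hand. The main computational input is Theorem \ref{A3}(iii), which gives the explicit relation $\widetilde{T_\Theta\N} = -\frac12(\{\{\widetilde\N,\Theta\},\widetilde\N\} + \lambda\Theta)$ when $\N^2 = \lambda\,\mathrm{Id}_E$.

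First I would write out the two cocycle conditions. By Definition \ref{weakdef}(i), $\N$ is weak deforming precisely when $\{\Theta, \{\{\widetilde\N,\Theta\},\widetilde\N\}\} = 0$. By Definition \ref{weaknij}, $\N$ is weak Nijenhuis precisely when $T_\Theta\N$ is proportional to a generalized almost cps structure and $\{\Theta, \widetilde{T_\Theta\N}\} = 0$; the hypothesis on $\N$ guarantees, via Theorem \ref{A3}(i), that $\widetilde{T_\Theta\N} \in \A^3$ exists so that this condition is meaningful. The bridge between the two is the formula from Theorem \ref{A3}(iii): applying $\d_\Theta = \{\Theta, \cdot\,\}$ to both sides of that identity gives
\begin{equation*}
\{\Theta, \widetilde{T_\Theta\N}\} = -\frac12\left(\{\Theta, \{\{\widetilde\N,\Theta\},\widetilde\N\}\} + \lambda\{\Theta,\Theta\}\right).
\end{equation*}

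The key observation is that $\{\Theta,\Theta\} = 0$, since $\Theta$ defines a Courant algebroid structure. Hence the $\lambda\Theta$ term drops out under $\d_\Theta$, and the displayed equation collapses to $\{\Theta, \widetilde{T_\Theta\N}\} = -\frac12\{\Theta, \{\{\widetilde\N,\Theta\},\widetilde\N\}\}$. Since the base field has characteristic $0$, the factor $-\frac12$ is invertible, so $\{\Theta, \widetilde{T_\Theta\N}\} = 0$ if and only if $\{\Theta, \{\{\widetilde\N,\Theta\},\widetilde\N\}\} = 0$. This is exactly the statement that $\N$ is weak Nijenhuis if and only if $\N$ is weak deforming. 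Combining this equivalence with Theorem \ref{CNS} yields the corollary.

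I do not expect any serious obstacle here: the corollary is essentially a translation of Theorem \ref{CNS} through the identity of Theorem \ref{A3}(iii), and the only subtlety is to notice that the scalar term $\lambda\Theta$ is annihilated by $\d_\Theta$ because $\Theta$ is itself a $\d_\Theta$-cocycle. The one point deserving a word of care is that Definition \ref{weaknij} requires $T_\Theta\N$ to live in $\A^3$ for the cocycle condition even to be stated; this is supplied by the hypothesis that $\N$ is proportional to a generalized almost cps structure together with Theorem \ref{A3}(i), and it is precisely this hypothesis that distinguishes the corollary from the more general Theorem \ref{CNS}.
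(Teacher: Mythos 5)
Your proof is correct and follows exactly the route the paper intends: the corollary is Theorem \ref{CNS} combined with the equivalence ``weak Nijenhuis $\Leftrightarrow$ weak deforming'' for tensors proportional to a generalized almost cps structure, which the paper records in the bullet list just before the corollary and which, as you show, follows from applying $\d_\Theta$ to formula \eqref{torsioncps} and using $\{\Theta,\Theta\}=0$. (One trivial slip: in restating Definition \ref{weaknij} you wrote that $T_\Theta\N$ must be proportional to a generalized almost cps structure, where you clearly meant $\N$; your subsequent use of the definition is correct.)
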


While the compatibility of $\Theta$ and $\{\widetilde \N, \Theta\}$ is 
satisfied as soon as $\N$ is weak deforming, 
it is the vanishing of the torsion which implies a morphism
property of $\N$. If we recall that a generalized almost cps structure
is a generalized cps
structure if and only if its torsion
vanishes, we can state,
\begin{proposition}\label{propmorphism}
Let $\N$ be a skew-symmetric endomorphism of $E$ proportional to a generalized
almost cps structure. Then 
$\N$ is a morphism of Courant algebroids from 
$(E, \langle ~, ~ \rangle, \{{\widetilde \N}, \Theta\})$ to 
$(E, \langle ~, ~ \rangle,\Theta)$ if and only if $\N$ is
proportional to a generalized cps structure.
\end{proposition}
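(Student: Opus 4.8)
The plan is to reduce the morphism condition to the vanishing of the torsion. Since the source and target Courant algebroids share the underlying bundle $E$ and the form $\langle~,~\rangle$, a morphism over the identity of $M$ amounts, exactly as for the underlying Leibniz algebroids, to a bundle endomorphism intertwining both anchors and both Dorfman brackets. By \eqref{lemma1}, the Dorfman bracket of $\{\widetilde\N,\Theta\}$ is $[u,v]_\N=\{\{u,\{\widetilde\N,\Theta\}\},v\}$, and I write $\rho_\N$ for its anchor, $\rho_\N(u)f=\{\{u,\{\widetilde\N,\Theta\}\},f\}$. One should note that $\N$ does not preserve $\langle~,~\rangle$ (indeed $\langle\N u,\N v\rangle=-\lambda\langle u,v\rangle$ when $\N^2=\lambda\,\mathrm{Id}_E$), so it is the Leibniz algebroid structure, not the bilinear form, that is to be intertwined.

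First I would verify that the anchor condition $\rho\circ\N=\rho_\N$ holds automatically, for every skew-symmetric $\N$, and so never obstructs the morphism property. The graded Jacobi identity gives $\{u,\{\widetilde\N,\Theta\}\}=\{\N u,\Theta\}+\{\widetilde\N,\{u,\Theta\}\}$, using $\{u,\widetilde\N\}=\N u$ from \eqref{Nu}. Bracketing with $f\in C^\infty(M)$ and using that $\{\widetilde\N,g\}=0$ for every $g\in C^\infty(M)$ (because $\widetilde\N\in\A^2$ involves only the fibre coordinates $\tau^a$), the second term contributes nothing and $\rho_\N(u)f=\{\{\N u,\Theta\},f\}=\rho(\N u)f$.

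It then remains to treat the brackets. By the very definition \eqref{deftorsion} of the torsion, $(T_\Theta\N)(u,v)=[\N u,\N v]-\N[u,v]_\N$, so $\N$ intertwines the two Dorfman brackets precisely when $T_\Theta\N=0$. Because $\N$ is proportional to a generalized almost cps structure, $\N^2=\lambda\,\mathrm{Id}_E$, and the torsion scales as $T_\Theta(c\N)=c^2\,T_\Theta\N$, the equation $T_\Theta\N=0$ says exactly that the generalized almost cps structure to which $\N$ is proportional has vanishing torsion, that is, that $\N$ is proportional to a generalized cps structure. Combining the anchor and bracket conditions yields the stated equivalence. In the affirmative case $T_\Theta\N=0$ makes $\N$ weak Nijenhuis, so by Corollary \ref{corollaryweak} the source $\{\widetilde\N,\Theta\}$ is genuinely a Courant structure and the statement is not vacuous.

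The main obstacle I anticipate is not computational but conceptual: fixing the appropriate notion of morphism here, namely a morphism of the underlying Leibniz algebroids rather than of the full metric structure, and checking that the anchor compatibility is free. Once these are settled, the equivalence with $T_\Theta\N=0$, and hence with $\N$ being proportional to a generalized cps structure, drops out immediately from \eqref{deftorsion} together with Theorem \ref{A3}.
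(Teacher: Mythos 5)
Your argument is correct. Note, however, that the paper itself offers no proof of Proposition \ref{propmorphism}: it only remarks that the statement ``implies and is implied by'' results in \cite{CGM} and \cite{G}, so there is nothing to compare line by line; your write-up supplies the argument the paper leaves implicit. The substance is right: by \eqref{deftorsion} the bracket-intertwining condition $\N[u,v]_\N=[\N u,\N v]$ is \emph{literally} $T_\Theta\N=0$; the anchor compatibility is automatic since $\{\widetilde\N,\cdot\}$ annihilates $\A^0$, so $\rho_\N=\rho\circ\N$; and the quadratic scaling $T_\Theta(c\N_0)=c^2T_\Theta\N_0$ converts ``$T_\Theta\N=0$'' into ``$\N$ is proportional to a generalized cps structure.'' Your conceptual point about the meaning of ``morphism'' is also the right one to flag: since $\langle\N u,\N v\rangle=-\lambda\langle u,v\rangle$, the pairing is preserved only in the generalized complex case, so the proposition can only be read as a morphism of the underlying Leibniz (Dorfman) structures and anchors, exactly parallel to part (ii) of Proposition 1.1 for Leibniz algebras. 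The only cosmetic improvement would be to observe that the final appeal to Theorem \ref{A3} is not needed -- the equivalence already follows from the definitions once the almost cps hypothesis is in place -- but this does not affect correctness.
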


Corollary \ref{corollaryweak} and 
Proposition \ref{propmorphism} imply and are implied by results to
be found in \cite{CGM} and \cite{G}.

\section{The case of the double of a Lie bialgebroid}\label{section4}

\subsection{The double of a Lie bialgebroid}\label{subsection4.1}
Let $A$ be a vector
bundle and let $E = A \oplus A^*$ be equipped with the canonical
symmetric bilinear form $\langle ~ , ~ \rangle$. 
The minimal symplectic realization of $E$ is $\widetilde E
= T^*[2]A[1]$, and the canonical Poisson bracket of $\A$ coincides with
the big bracket (for which see \cite{R2002} \cite{yksquasi}
\cite{yksPM287}), which we also denote by $\{~,~\}$.

Let $((A, \mu),(A^*,\gamma))$ be a Lie bialgebroid over a manifold
$M$. Then $\mu$ and $\gamma$ are elements of $\A^3$ satisfying 
$\{\mu,\mu\}= \{\mu,\gamma\} = \{\gamma, \gamma\} =0$.
The canonical
symmetric bilinear form and $\Theta = \mu +
\gamma$ turn $E = A \oplus A^*$ into a Courant algebroid called the
\emph{double} of $((A, \mu),(A^*,\gamma))$. See
\cite{LWX}. 
In Section \ref{trivial}, we shall consider the case 
$\gamma = 0$, in which case the Lie bialgebroid is called \emph{trivial}.
In particular if $A=TM$ equipped with the identity endomorphism as
anchor and the Lie bracket of vector fields
and if $\gamma =0$, 
then $TM \oplus T^*M$ is the
\emph{standard Courant algebroid} or \emph{generalized tangent bundle}
  of $M$.

\begin{definition}
A Lie algebroid $A$ is called \emph{irreducible} if any vector bundle 
endomorphism $\psi$ of $A$ satisfying $\psi[X,Y] = [X, \psi Y]$, for
all sections $X, Y$ of $A$, is proportional to the identity,
${\mathrm{Id}}_A$, of $A$. 
\end{definition}
It is proved in \cite{CGM} that the tangent
bundle of any connected manifold is an irreducible Lie algebroid.
We can now give examples of irreducible Courant algebroids.

When $(A,\mu)$ is an irreducible Lie algebroid over a connected 
manifold, the double of the trivial Lie bialgebroid $((A, \mu),(A^*,0))$ is
an irreducible Courant algebroid in the sense of Definition
\ref{defcourantirred}. In particular, the generalized tangent bundle of a
connected manifold is an irreducible Courant algebroid.
To prove this statement, we write a symmetric endomorphism of  $A \oplus A^*$
as $\phi = \begin{pmatrix} \psi & \alpha\\ \beta & \,^t \!
\psi \end{pmatrix}$, where $\psi$ is an endomorphism of $A$, and
$\alpha: A^* \to A$ and $\beta: A \to A^*$ are symmetric. We then 
express the conditions $\phi[u,v] = [\phi u,v]$ and 
$\phi [u,v] = [u, \phi v]$ for, successively, $u=X$ and $v=Y$, then
$u=X$ and $v=\eta$, then $u=\xi$ and $v=Y$, then $u=\xi$ and $v=\eta$,
where $X, Y \in \Gamma A$ and $\xi, \eta \in \Gamma(A^*)$.
By the irreducibility of $(A,\mu)$, we find that $\psi$ is a constant
multiple of the identity of $A$, then that $\alpha$ and $\beta$ must vanish.

\subsection{Generalized almost cps structures on $A \oplus A^*$}
Any vector bundle endomorphism of $E = A \oplus A^*$ is of the form 
$\N = \begin{pmatrix}N & \pi\\ \omega & N' \end{pmatrix}$, where 
$N : A \to A$, $N' : A^* \to A^*$, $\pi : A^* \to A$ and 
$\omega : A \to A^*$. The endomorphism $\N$ is 
skew-symmetric if and only if $N' = - \, ^t \!N$, $\pi$ is a bivector
on $A$, and $\omega$ is a $2$-form on $A$.

The conditions for $\N^2 = \lambda \, {\mathrm {Id}_E}$ are (i) $N\pi$ is a
bivector, (ii) $\omega N$ is a $2$-form and (iii) $N^2 + \pi \omega =
\lambda \, {\mathrm{Id}}_A$. A sufficient condition
for (iii) is that $N^2$ be a scalar multiple of the
identity and that $\pi$ and $\omega$ be inverses of one another, or
that $\pi= 0 $, or that $\omega = 0$.
See \cite{SX} \cite{A} for the general case and its interpretation in terms of
Poisson quasi-Nijenhuis structures.
\subsection{Tensors on $A$ and functions on $T^*[2]A[1]$}
Let $A$ be a vector bundle.
We show that skew-symmetric tensors on $A$ can be identified 
with elements of $\A$, the graded algebra of functions on $T^*[2]A[1]$.

A tensor $t \in A^* \otimes A$
can be considered as an element in $(A \oplus A^*) \otimes (A \oplus
A^*)$ by setting
\begin{equation}
{t}(  X + \xi ; Y + \eta) = \langle t(X), \eta \rangle,
\end{equation}
and, because $A \oplus A^*$ is self-dual, $t$ can be skew-symmetrized
into the element $\widetilde{t}$ in $\wedge^2(A \oplus A^*)$ such that
\begin{equation}
 \widetilde{t}(X+\xi,Y+\eta) =
\langle t(X), \eta \rangle
- \langle t(Y), \xi \rangle,
\end{equation} 
for all  $X, Y \in A$ and  $\xi, \eta \in A^*$. 
The map induced on sections of $A \oplus A^*$ by  $\widetilde{t}$
is the element in $\A$ that corresponds to $t$.

In other words, if $N$ is a vector bundle endomorphism of $A$, 
considered as an element in $A^* \otimes A$, then
the
skew-symmetric endomorphism $\N$ of $A \oplus A^*$ defined by $N$ is 
such that
\begin{equation}\label{scriptN}
\N (X+\xi) 
 = NX - \, ^t\! N \xi,
\end{equation} 
and as in \eqref{Nu},
\begin{equation}\label{tildescriptN}
\N (X+\xi) = \{X + \xi,  \widetilde{\N}\}. 
\end{equation} 

We can also skew-symmetrize higher-order tensors. 
A tensor $t \! \in \! \wedge^2 A^* \otimes A$
can be considered as an element in $\wedge^2(A \oplus A^*) \otimes (A \oplus
A^*)$ by setting
\begin{equation}
{t}(X + \xi, Y +\eta ; Z +\zeta) = \langle t(X,Y), \zeta \rangle,
\end{equation}
in which case $ \widetilde{t}$ is
the element in $\wedge^3(A \oplus A^*)$ such that
\begin{equation}
 \widetilde{t}(X+\xi,Y+\eta,Z+\zeta) =
\langle t(X,Y), \zeta \rangle
+ \langle t(Y,Z), \xi \rangle + \langle t(Z, X), \eta \rangle,
\end{equation} 
for all  $X, Y , Z \in A$ and  $\xi, \eta, \zeta \in A^*$. 
The map induced on sections of $A \oplus A^*$ by  $\widetilde{t}$
is the element 
in $\A$ that corresponds to $t$. Then
\begin{equation}
 \widetilde{t}(X+\xi,Y+\eta,Z+\zeta) = 
\{\{\{X+\xi, {\widetilde{t}} \, \}, Y+\eta\}, Z +\zeta\} .
\end{equation}

Similarly, if $t \in \wedge^2 A \otimes A^*$, then $\widetilde t 
\in \wedge^3(A \oplus A^*) $ is defined by
\begin{equation}
\widetilde t(X+\xi,Y+\eta,Z+\zeta) = \langle t(\xi,\eta),Z\rangle +
\langle t(\eta,
\zeta),X\rangle 
+ \langle t(\zeta,\xi ),Y\rangle.
\end{equation} 

Let $(e_\alpha)$ be a local basis of sections of $A$ and let 
$(\epsilon^\alpha)$ be the dual basis. Let $(x^i,\tau^\alpha, p_i, 
\theta_\alpha)$ be local canonical coordinates on $T^*[2]A[1]$. If
$N=N^\alpha_\beta \epsilon^\beta e_\alpha$, then 
$\widetilde \N = N^\alpha_\beta \tau^\beta \theta_\alpha$. 
If $t=\frac12 t^\alpha_{\beta\gamma} \epsilon^\beta \epsilon^\gamma 
e_\alpha$, then 
$\widetilde t = \frac12 t^\alpha_{\beta\gamma} \tau^\beta \tau^\gamma
\theta_\alpha$.
As elements in $\A^2$, $N$ and $\widetilde \N$ are identified, and as
elements in $\A^3$, $t$ and $\widetilde t$ are identified.

\subsection{The double of deformed brackets}
Let  $(A, \mu)$ and $(A^*,\gamma)$ be Lie algebroids, with brackets
denoted by  $[~,~]^{\mu}$ and $[~,~]^{\gamma}$. Let 
$N:A \to A$ be a vector bundle endomorphism. Set
$\{N,\mu\}=\mu_N$ and $\{N,\gamma\} = \gamma_{-\, ^t\!N}$. The
associated brackets are  $[~,~]^{\mu_N} = [~,~]^\mu_N$ on $A$, and 
$[~,~]^{\gamma_{-\, ^t\!N}} = [~,~]^\gamma_{-\, ^t\!N}$
on $A^*$, which are not necessarily Lie brackets. 
We consider the bracket on $A \oplus A^*$ defined by $\mu + \gamma$
(respectively, $\mu_N + \gamma_{-\, ^t\!N}$), 
which we call the \emph{double} of brackets $\mu$ and $\gamma$
(respectively, $[~,~]^{\mu}_N$ and $[~,~]^{\gamma}_{-\, ^t\!N}$).  
\begin{proposition}\label{propdouble}
Let  $(A, \mu)$ and $(A^*,\gamma)$ be Lie algebroids and let $[~,~]^{\mu+\gamma}$ be
the double of brackets $[~,~ ]^\mu $
and $[~,~]^\gamma$ in $A \oplus A^*$. 
Let $N$ be a vector bundle
endomorphism of $A$,
and let $\N$ be the associated skew-symmetric endomorphism 
of $A \oplus A^*$ defined by \eqref{scriptN}.
Then bracket $[~,~]^{\mu+\gamma}_\N$ is the double of 
brackets $[~,~]^\mu_N$ and $[~,~]^\gamma_{-\, ^t\!N}$.
\end{proposition}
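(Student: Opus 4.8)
The plan is to reduce the statement to the derived-bracket formula of the Lemma, equation \eqref{lemma1}, and then to use the bilinearity of the big bracket together with the identification of $\widetilde{\N}$ with $N$ in $\A^2$. Applying \eqref{lemma1} to the Courant algebroid $(A \oplus A^*, \langle ~,~ \rangle, \Theta)$ with $\Theta = \mu + \gamma$ and to the skew-symmetric endomorphism $\N$ associated with $N$ by \eqref{scriptN}, one obtains, for all sections $u, v$ of $A \oplus A^*$,
\[
[u,v]^{\mu+\gamma}_{\N} = \{\{u, \{\widetilde{\N}, \mu + \gamma\}\}, v\}.
\]
Hence everything comes down to identifying the degree-$3$ function $\{\widetilde{\N}, \mu + \gamma\}$ that governs the deformed bracket.

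First I would invoke the identification established in the subsection on tensors on $A$ and functions on $T^*[2]A[1]$: the function $\widetilde{\N} \in \A^2$ attached by \eqref{scriptN}--\eqref{tildescriptN} to the skew-symmetric endomorphism $\N(X+\xi) = NX - \, ^t\! N \xi$ of $A \oplus A^*$ coincides, as an element of $\A^2$, with the function attached to $N$ viewed in $A^* \otimes A$; in the local coordinates $(\tau^\alpha, \theta_\alpha)$ both are equal to $N^\alpha_\beta \tau^\beta \theta_\alpha$. Using the bilinearity of the big bracket and the conventions fixed at the start of the subsection, I would then split
\[
\{\widetilde{\N}, \mu + \gamma\} = \{N, \mu\} + \{N, \gamma\} = \mu_N + \gamma_{-\, ^t\! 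N}.
\]
Substituting this into the formula above gives
\[
[u,v]^{\mu+\gamma}_{\N} = \{\{u, \mu_N + \gamma_{-\, ^t\! N}\}, v\},
\]
which is by definition the bracket on $A \oplus A^*$ determined by the element $\mu_N + \gamma_{-\, ^t\! N}$ of $\A^3$, that is, the double of $[~,~]^\mu_N$ and $[~,~]^\gamma_{-\, ^t\! N}$. This completes the argument.

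The one point requiring care is the appearance of the transpose. Bracketing $\widetilde{\N}$ with the $A$-structure $\mu$ reproduces the $N$-deformation $\mu_N$, whereas bracketing the same function with the $A^*$-structure $\gamma$ yields the $(-\, ^t\! N)$-deformation $\gamma_{-\, ^t\! N}$; the transpose arises because $\gamma$ is assembled from the fibre coordinates $\theta_\alpha$ of $A^*$, on which $\widetilde{\N}$ acts as $-\, ^t\! N$, while $\mu$ is assembled from the $\tau^\beta$, on which it acts as $N$. This asymmetry is precisely what is encoded in the conventions $\{N,\mu\} = \mu_N$ and $\{N,\gamma\} = \gamma_{-\, ^t\! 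N}$, so once these are granted the proof requires no further computation.
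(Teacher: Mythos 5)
Your proof is correct and follows essentially the same route as the paper: the paper's entire argument is the identity $\{\widetilde\N, \mu+\gamma\} = \{N,\mu\} + \{N,\gamma\} = \mu_N + \gamma_{-\,^t\!N}$ (equation \eqref{double}), combined implicitly with the derived-bracket formula \eqref{lemma1}. You merely make explicit the appeal to \eqref{lemma1} and the identification of $\widetilde\N$ with $N$ in $\A^2$, which the paper compresses into ``The result follows.''
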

\begin{proof}
It is clear that 
\begin{equation}\label{double}
\{ \widetilde\N, {\mu} + {\gamma}\} =
\{ \widetilde\N, {\mu}\}
+\{ \widetilde \N,{\gamma}\} = {\{N,\mu\}} +
{\{N, \gamma\}}.
\end{equation}
The result follows.
\end{proof}
\begin{remark}\rm{
The result of the proposition is valid more generally, 
independently of the assumptions $\{\mu,\mu\} =0$
and $\{\gamma, \gamma\}=0$ which express the fact that $(A, \mu)$ and
$(A^*,\gamma)$ are Lie algebroids.}
\end{remark}
\subsection{Torsion of $\N$ in the case of the double of a Lie
  bialgebroid}\label{sectiondouble}

When $(A, \mu)$ and $(A^*,\gamma)$ are Lie
algebroids, if the torsion 
${ T}_{\mu +\gamma}\N$ of $\N$ defines an element in $\A^3$, 
we can compare 
$\widetilde{{ T}_{\mu +\gamma}\N}$ with the sum of 
the elements in $\Gamma (\wedge^3(A \oplus A^*)) \subset \A ^3$ 
defined by the torsion $T_\mu N$ of $N$ and the
torsion $T_\gamma \,^t\!N$ of
$^t \! N$. 
 
\begin{definition}
A vector bundle endomorphism $N$ of a vector bundle $A$ 
is called an \emph{almost cps
structure} if $N^2 = \epsilon \, {\mathrm{Id_A}}$, with $\epsilon = -
1, \, 1$ or $0$. On a 
Lie algebroid $(A,\mu)$, an almost cps
structure is called a \emph{cps structure} if, in addition, the Nijenhuis
torsion $T_\mu N$ of $N$ vanishes. 
\end{definition}

\begin{theorem}\label{A3bialg}
Let $((A, \mu),(A^*,\gamma))$ be a Lie bialgebroid.
Let  $N:A \to A$ be a vector bundle endomorphism, and let 
$\N$ be the skew-symmetric endomorphism of $A \oplus A^*$ with matrix
$\begin{pmatrix}N & 0\\0 & - \, ^t \!N\end{pmatrix}$.

\noindent (i) The element $\{\{\widetilde \N, {\mu} +
{\gamma} \},
\widetilde{\N}\}$ is in $\A^3$ and is equal to $\{\{ {N}, \mu + \gamma\}, N \}$.

\noindent (ii) 
If $N$ is
proportional to an almost cps structure on $A$, then
\begin{equation}\label{eqdouble}
\widetilde{{T}_{\mu +\gamma}\N}
=\widetilde{T_\mu N} + \widetilde{T_\gamma \, ^tN}.
\end{equation} 
The explicit form of equation \eqref{eqdouble} is
\begin{equation}
\begin{array}{lll}
({T}_{\mu +\gamma}\N)(X+\xi, Y+\eta, Z+\zeta)\\ =
(T_\mu N)(X,Y, \zeta)+  (T_\mu N)(Y,Z, \xi)+  (T_\mu N)(Z, X, \eta)\\
+ (T_\gamma \, ^t \!N)(\xi, \eta, Z)+  (T_\gamma \, ^t \!N)(\eta, \zeta, X)+
(T_\gamma \, ^t \!N)(\zeta, \xi, Y), 
\end{array}
\end{equation}
for all sections $X+\xi, Y+\eta, Z+\zeta$ of $A\oplus A^*$.
\end{theorem}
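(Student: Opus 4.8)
The plan is to exploit the Poisson-algebraic framework developed in the excerpt, reducing everything to the big bracket on $\A$ and the explicit correspondence between tensors on $A$ and elements of $\A$. The key observation for part (i) is formula \eqref{double} from Proposition \ref{propdouble}: since $\widetilde\N$ has the diagonal matrix $\begin{pmatrix}N & 0\\0 & -\,^t\!N\end{pmatrix}$, it is identified with $N$ as an element of $\A^2$ (see the coordinate computation $\widetilde\N = N^\alpha_\beta\tau^\beta\theta_\alpha$), and hence $\{\widetilde\N,\mu+\gamma\} = \{N,\mu\}+\{N,\gamma\} = \mu_N + \gamma_{-\,^t\!N}$. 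Applying the bracket with $\widetilde\N$ once more and using the same identification gives $\{\{\widetilde\N,\mu+\gamma\},\widetilde\N\} = \{\{N,\mu+\gamma\},N\}$, which lies in $\A^3$. First I would verify the degree count: $\widetilde\N\in\A^2$, $\mu+\gamma\in\A^3$, the Poisson bracket has degree $-2$, so $\{\widetilde\N,\mu+\gamma\}\in\A^3$ and $\{\{\widetilde\N,\mu+\gamma\},\widetilde\N\}\in\A^3$, confirming part (i).

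For part (ii) I would invoke Theorem \ref{A3}(iii) twice, once for the double and once for each factor. Since $N$ is proportional to an almost cps structure, $N^2 = \epsilon\,\mathrm{Id}_A$ for some scalar, whence $\N^2 = \epsilon\,\mathrm{Id}_E$ and $\N$ is proportional to a generalized almost cps structure on $E$; Theorem \ref{A3}(iii) then gives $\widetilde{T_{\mu+\gamma}\N} = -\frac12(\{\{\widetilde\N,\mu+\gamma\},\widetilde\N\} + \epsilon(\mu+\gamma))$. The corresponding formula for a Lie algebroid (the analogue of \eqref{torsioncps} in the purely Lie-algebroid setting, cited as lemma 2 of \cite{yksR} and used to prove Lemma \ref{lemma1}) gives $\widetilde{T_\mu N} = -\frac12(\{\{N,\mu\},N\}+\epsilon\mu)$ and, for $^t\!N$ which satisfies $(^t\!N)^2 = \epsilon\,\mathrm{Id}_{A^*}$, $\widetilde{T_\gamma\,^t\!N} = -\frac12(\{\{^t\!N,\gamma\},\,^t\!N\}+\epsilon\gamma)$. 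Using part (i) to split $\{\{N,\mu+\gamma\},N\} = \{\{N,\mu\},N\} + \{\{N,\gamma\},N\}$ and recognizing that the second summand equals $\{\{^t\!N,\gamma\},\,^t\!N\}$, the scalar terms combine as $\epsilon(\mu+\gamma)$ and the identity \eqref{eqdouble} drops out by adding the two factor formulas.

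The one point requiring genuine care—and what I expect to be the main obstacle—is the identity $\{\{N,\gamma\},N\} = \{\{^t\!N,\gamma\},\,^t\!N\}$ together with the correct bookkeeping of which endomorphism acts on which subbundle. Here $\gamma\in\A^3$ encodes the bracket on $A^*$, so the torsion of $N$ relative to $\gamma$ is naturally the torsion of its transpose $^t\!N$ acting on $A^*$; I would check this by the coordinate expression for $\widetilde\N$ (the off-diagonal sign $-\,^t\!N$ in \eqref{scriptN} is precisely what makes $\{\widetilde\N,\gamma\}$ reproduce $\{^t\!N,\gamma\} = \gamma_{-\,^t\!N}$, matching the convention in Proposition \ref{propdouble}). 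Once this identification is pinned down, the computation is a direct application of \eqref{torsioncps} and its Lie-algebroid analogue, requiring no hard analysis. Finally, I would obtain the explicit componentwise form by feeding three sections $X+\xi$, $Y+\eta$, $Z+\zeta$ into $\widetilde{T_{\mu+\gamma}\N}$ and using the two skew-symmetrization formulas established in Section \ref{section4} for elements of $\wedge^2 A^*\otimes A$ and $\wedge^2 A\otimes A^*$, which expand $\widetilde{T_\mu N}$ and $\widetilde{T_\gamma\,^t\!N}$ into their respective cyclic sums.
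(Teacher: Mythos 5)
Your proposal is correct and follows essentially the same route as the paper: part (i) via the identification of $\widetilde\N$ with $N$ in $\A^2$ together with Proposition \ref{propdouble}, and part (ii) by applying formula \eqref{torsioncps} to $\N$ on the double and its Lie-algebroid analogue to $N$ and $^t\!N$ on each factor, then adding. Your extra care over the identity $\{\{N,\gamma\},N\}=\{\{\,^t\!N,\gamma\},\,^t\!N\}$ (which holds since the skew-symmetrizations of $N$ and $^t\!N$ differ only by a sign that cancels in the double bracket) is a point the paper passes over silently, but it does not change the argument.
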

\begin{proof} 
The proof of (i) is based on the remarks
that 
$\{\{ \widetilde{\N}, \mu \}, \widetilde{\N}\}$ 
(respectively, $\{\{ \widetilde{\N}, {\gamma} \}, \widetilde{\N}\}$)
is equal to the element 
$\{\{ {N}, \mu \}, {N}\}$ 
(respectively, $\{\{ {N}, \gamma\}, {N}\}$) of $\A^3$. Therefore, by 
Proposition \ref{propdouble},
$\{\{ \widetilde\N, {\mu} +{\gamma} \}, \widetilde{\N}\}$ is equal to
$\{\{ {N}, \mu + \gamma\}, N \}$.

Since, when $N^2 = \lambda \, {\mathrm{Id}}_A$, by formula \eqref{torsioncps},
$$T_\mu N = - \frac12(\{\{ {N}, \mu \}, {N}\} + \lambda \mu), \quad
T_\gamma \, ^t\!N = - \frac12(\{\{ {N}, \gamma \}, {N}\} + \lambda \gamma), 
$$
and 
$$
\widetilde{T_{\mu+\gamma} {\N}} = 
- \frac12(\{\{\widetilde{\N}, {\mu} + {\gamma}\},
 \widetilde{\N}\} 
+\lambda ({\mu} + {\gamma})),
$$ 
the result of (ii) follows.
\end{proof}

\begin{remark}\rm{
The result of the theorem is valid more generally, 
independently of the assumptions $\{\mu,\mu\} = \{\gamma, \gamma\}=
\{\mu,\gamma\} =0$ which express the fact that $((A, \mu),(A^*,\gamma))$
is a Lie bialgebroid.}
\end{remark}

\begin{remark}\rm{
Since the Dorfman bracket on $\Gamma(A \oplus A^*)$ reduces to
$[~,~]^\mu$ on $\Gamma A$ and to
$[~,~]^\gamma$ on $\Gamma A^*$, it is clear that, for any endomorphism $N$ of $A$,
\begin{equation}
(T_{\mu + \gamma} \N)_{|A} = T_\mu N \quad \mathrm{and} \quad (T_{\mu + \gamma}
\N)_{|A^*} = T_\gamma \, ^t\! N.
\end{equation}}
\end{remark}

As a consequence of Theorem \ref{A3bialg} (ii),
we obtain, 

\begin{theorem}\label{biNij}
Let $((A, \mu),(A^*,\gamma))$ be a Lie bialgebroid.
Let $N$ be proportional to an almost cps structure on $A$, and assume
that $N$ is a Nijenhuis tensor for $(A,\mu)$ and $(A^*,\gamma)$, 
i.e., $T_\mu N = 0$ and $T_\gamma \, ^t\!N = 0$. Then $N$ gives rise to a 
Nijenhuis tensor $\N = \begin{pmatrix}N & 0\\0 & - \, ^t \!N\end{pmatrix}$ 
for the Courant algebroid 
$(A \oplus A^*, \mu + \gamma)$.
\end{theorem}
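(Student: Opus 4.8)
The plan is to deduce the statement directly from Theorem \ref{A3bialg}(ii), so the real work reduces to verifying that its hypotheses are met on the double and that the vanishing of the associated element of $\A^3$ forces the torsion map itself to vanish. First I would record that $\N$ is proportional to a generalized almost cps structure on $E = A \oplus A^*$: writing $N^2 = \lambda \, \mathrm{Id}_A$ for the scalar $\lambda$ arising from the hypothesis that $N$ is proportional to an almost cps structure, a block computation gives
\begin{equation*}
\N^2 = \begin{pmatrix} N^2 & 0 \\ 0 & (\, ^t\!N)^2 \end{pmatrix} = \begin{pmatrix} \lambda \, \mathrm{Id}_A & 0 \\ 0 & \lambda \, \mathrm{Id}_{A^*} \end{pmatrix} = \lambda \, \mathrm{Id}_E,
\end{equation*}
since $(\, ^t\!N)^2 = \, ^t(N^2) = \lambda \, \mathrm{Id}_{A^*}$. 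Hence $\N^2 = \lambda \, \mathrm{Id}_E$ and $\N$ is of exactly the type required by Theorem \ref{A3}.

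Next I would invoke Theorem \ref{A3}(i), which guarantees that the torsion $T_{\mu + \gamma}\N$ is $C^\infty(M)$-linear in both arguments and skew-symmetric, and therefore defines an element $\widetilde{T_{\mu+\gamma}\N} \in \A^3$. By formula \eqref{torsion0} one has $(T_{\mu+\gamma}\N)(u,v) = \{\{u, \widetilde{T_{\mu+\gamma}\N}\}, v\}$, so the torsion map vanishes identically as soon as $\widetilde{T_{\mu+\gamma}\N} = 0$. I would then apply Theorem \ref{A3bialg}(ii), whose hypothesis that $N$ be proportional to an almost cps structure is precisely our assumption, to obtain
\begin{equation*}
\widetilde{T_{\mu+\gamma}\N} = \widetilde{T_\mu N} + \widetilde{T_\gamma \, ^t\!N}.
\end{equation*}

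Finally, since $T_\mu N = 0$ and $T_\gamma \, ^t\!N = 0$ by hypothesis, and since the identifications $t \mapsto \widetilde t$ of skew-symmetric tensors on $A$ and on $A^*$ with elements of $\A^3$ (as described in Section \ref{section4}) are linear, both summands vanish; hence $\widetilde{T_{\mu+\gamma}\N} = 0$ and therefore $T_{\mu+\gamma}\N = 0$, i.e.\ $\N$ is a Nijenhuis tensor for the Courant algebroid $(A \oplus A^*, \mu + \gamma)$. The only substantive point beyond bookkeeping is the reduction step --- checking that $\N^2$ is a scalar multiple of $\mathrm{Id}_E$ so that $T_{\mu+\gamma}\N$ genuinely lands in $\A^3$ and Theorem \ref{A3bialg}(ii) is applicable --- and this is the routine block computation displayed above. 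I do not anticipate any serious obstacle, the statement being essentially an immediate corollary of Theorem \ref{A3bialg}(ii) together with the tensoriality provided by Theorem \ref{A3}.
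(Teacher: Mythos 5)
Your proof is correct and follows exactly the route the paper intends: Theorem \ref{biNij} is stated there as an immediate consequence of Theorem \ref{A3bialg}(ii), and you supply precisely the bookkeeping the paper leaves implicit (the block computation showing $\N^2 = \lambda\,\mathrm{Id}_E$, the tensoriality from Theorem \ref{A3} so that the vanishing of $\widetilde{T_{\mu+\gamma}\N}\in\A^3$ forces $T_{\mu+\gamma}\N=0$, and the additivity formula \eqref{eqdouble}). No gaps.
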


For any scalar $\kappa$,
$T_\mu (N + \kappa \, {\rm{Id}}_A) = T_\mu N$, 
$T_\gamma (\,^t\!N + \kappa \, {\rm{Id}}_{A^*}) = T_\gamma (\,^t\!N)$
and $T_{\mu + \gamma} (\N + \kappa \, {\rm{Id}}_{A \oplus A^*}) = T_{\mu +
  \gamma} \N$, and therefore 
Theorem \ref{biNij} is also valid in the slightly more
general case of
$\N + \kappa \, {\rm{Id}}_E$, where $\N$ is skew-symmetric and $\kappa$ is a
scalar. But there is no analogous statement for more general
non skew-symmetric endomorphisms of $A \oplus A^*$. (In theorem 4.1 of
\cite{nunes}, because of a change of notation in the course of the
proof, the assumption $N^2 = \lambda_2$ should be replaced by
$(N - \lambda_1)^2 = \lambda_2$. With this modification, that theorem
is equivalent to the preceding generalized form of Theorem  \ref{biNij}.)

\medskip

We obtain the following converse of Theorem \ref{biNij} as a particular
case of Theorem \ref{courantirred}.
\begin{theorem}\label{biNijconverse}
Let $((A, \mu),(A^*,\gamma))$ be a Lie bialgebroid such that $A \oplus
A^*$ is an irreducible Courant algebroid. If 
$\N = \begin{pmatrix} N & 0 \\ 0 & - \,^t\!N 
\end{pmatrix}$ is a Nijenhuis tensor for $A \oplus A^*$,
then $N$ is
proportional to a cps
structure on $A$, and $\,^t\!N$ is proportional to a cps
structure on $A^*$.
\end{theorem}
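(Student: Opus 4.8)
Theorem \ref{biNijconverse} follows almost immediately from Theorem \ref{courantirred}, which is explicitly flagged in the statement as the tool to use. Here is the plan.

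The plan is to invoke Theorem \ref{courantirred} directly. That theorem asserts that on an irreducible Courant algebroid, any Nijenhuis tensor is proportional to a generalized cps structure, meaning $\N$ is proportional to a skew-symmetric endomorphism satisfying $\N^2 = \lambda \, \mathrm{Id}_E$ with $\lambda \in \{-1,0,1\}$, whose torsion vanishes. So the first step is to apply Theorem \ref{courantirred} to the given Nijenhuis tensor $\N = \begin{pmatrix} N & 0 \\ 0 & -\,^t\!N \end{pmatrix}$ on the irreducible Courant algebroid $A \oplus A^*$, concluding that $\N$ is proportional to a generalized cps structure, i.e.\ (after rescaling) $\N^2$ is a scalar multiple of $\mathrm{Id}_{A \oplus A^*}$.

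Next I would translate the condition $\N^2 = \lambda \, \mathrm{Id}_{A \oplus A^*}$ into a condition on $N$. Since $\N$ is block-diagonal, $\N^2 = \begin{pmatrix} N^2 & 0 \\ 0 & (\,^t\!N)^2 \end{pmatrix} = \begin{pmatrix} N^2 & 0 \\ 0 & \,^t\!(N^2) \end{pmatrix}$, so $\N^2 = \lambda \, \mathrm{Id}_{A \oplus A^*}$ forces $N^2 = \lambda \, \mathrm{Id}_A$. Hence $N$ is proportional to an almost cps structure on $A$, and correspondingly $\,^t\!N$ is proportional to an almost cps structure on $A^*$. The third step is to use the last Remark of Section \ref{sectiondouble} (the restriction identity $(T_{\mu+\gamma}\N)_{|A} = T_\mu N$ and $(T_{\mu+\gamma}\N)_{|A^*} = T_\gamma\,^t\!N$): since $\N$ is Nijenhuis, $T_{\mu+\gamma}\N = 0$, so restricting to $\Gamma A$ and $\Gamma A^*$ gives $T_\mu N = 0$ and $T_\gamma\,^t\!N = 0$. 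Therefore $N$ is proportional to a cps structure on $(A,\mu)$ and $\,^t\!N$ is proportional to a cps structure on $(A^*,\gamma)$, which is exactly the claim.

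There is no serious obstacle here; the only point requiring a moment's care is the first step, namely confirming that Theorem \ref{courantirred}, as stated, applies to the block-diagonal $\N$. That theorem is stated for an arbitrary skew-symmetric Nijenhuis tensor on an irreducible Courant algebroid, and the hypothesis of irreducibility of $A \oplus A^*$ is granted, so this is immediate. The mild subtlety to watch is bookkeeping around the scalar of proportionality: ``proportional to a cps structure'' already absorbs the rescaling, so one need not separately track the constant $\lambda$ once it is known to be a scalar. Rather than re-deriving the torsion restriction, I would simply cite the Remark and Theorem \ref{courantirred}, making the proof essentially a two-line assembly of already-established facts.
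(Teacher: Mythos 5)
Your proposal is correct and follows exactly the route the paper intends: Theorem \ref{courantirred} gives that $\N$ is proportional to a generalized cps structure on the irreducible Courant algebroid $A\oplus A^*$, the block-diagonal form of $\N^2$ then forces $N^2$ to be a scalar multiple of $\mathrm{Id}_A$, and the restriction identities $(T_{\mu+\gamma}\N)_{|A}=T_\mu N$, $(T_{\mu+\gamma}\N)_{|A^*}=T_\gamma\,^t\!N$ yield the vanishing of both torsions. The paper states this only as a one-line consequence of Theorem \ref{courantirred}; you have merely supplied the same assembly in full detail.
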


We now outline an alternate, computational 
proof of Theorem \ref{A3bialg} 
that does not use the Poisson bracket of $\A$. This longer proof consists of
computing the vector part and the form part of $({T}_{\mu
  +\gamma}\N)(X, Y)$,
$({T}_{\mu +\gamma}\N)(\xi, \eta)$, $({T}_{\mu +\gamma}\N)(X, \eta)$
and $({T}_{\mu +\gamma}\N)(\xi, Y)$, and then the duality product of each with
$Z$ or $\zeta$. It utilizes the definitions 
$$ [X,Y] =[X,Y]^\mu, \,
[X,\eta] = - i_\eta \d_\gamma X + {\mathcal L}^\mu_X \eta,  \,
 [\xi, Y] = {\mathcal L}^\gamma_\xi Y - i_Y \d_\mu \xi, \,
[\xi, \eta] =[\xi, \eta]^\gamma, 
$$
and $\N X = NX$, $\N \xi = - \, ^t \! N \xi$. 
Clearly
\begin{equation}\label{01}
\langle  (T_{\mu+ \gamma} \N) (X,Y) \, , \, Z + \zeta \rangle 
= \langle {(T_{\mu} N)} (X, Y), \zeta\rangle  
\end{equation}
and
\begin{equation}\label{02}\langle  (T_{\mu+ \gamma} \N) (\xi,\eta) \, ,
  \, Z + \zeta \rangle 
= \langle {(T_{\gamma} \, ^t \!N)} (\xi, \eta), Z \rangle .
\end{equation}
One finds, after a computation,
\begin{equation}\label{1}
\langle  (T_{\mu+ \gamma} \N) (X,\eta) \, , \, Z \rangle  = 
\langle  (T_{\mu} N) (Z,X) +  [N^2 Z,X]^\mu -
N^2 [Z,X]^\mu \, , \, \eta \rangle ,
\end{equation}
\begin{equation}\label{2}
\langle  (T_{\mu+ \gamma} \N) (X,\eta) \, , \, \zeta \rangle 
= 
\langle (T_{\gamma} \, ^t\! N) (\eta,\zeta) \, , \, X \rangle + 
\d_\gamma (N^2X)(\eta,\zeta) - (\d_\gamma X)(\eta, \, ^t\!N^2\zeta).
\end{equation}
Similarly, one finds
\begin{equation}\label{3}
\langle  (T_{\mu+ \gamma} \N) (\xi,Y) \, , \, \zeta \rangle  = 
\langle  (T_{\gamma} \, ^t \!N) (\zeta,\xi) +  [\, ^t \!N^2 \zeta,\xi]^\gamma -
\, ^t \!N^2 [\zeta,\xi]^\gamma \, , \, Y \rangle ,
\end{equation}
\begin{equation}\label{4}
\langle  (T_{\mu+ \gamma} \N) (\xi, Y) \, , \, Z \rangle 
= 
\langle (T_{\mu}  N) (Y,Z) \, , \, \xi \rangle + 
\d_\mu (\, ^t \!N^2 \xi)(Y,Z) - (\d_\mu \xi)(Y, N^2Z).
\end{equation}

If condition $N^2 = \lambda \, {\mathrm{Id}}_A$ is satisfied,
equations \eqref{1}, \eqref{2}, \eqref{3} and \eqref{4} simplify and we
recover the result of Theorem \ref{A3bialg}.

\begin{remark}\label{Airred}\rm{ From equations
\eqref{01},\eqref{02}, \eqref{1} and \eqref{3},
we see that the conclusion of Theorem \ref{biNijconverse} is valid
when $(A,\mu)$ or $(A^*,\gamma)$ is an irreducible Lie algebroid.}
\end{remark}

\subsection{Deformations of Lie bialgebroids}
When $(A,\mu)$ and $(A^*,\gamma)$ are Lie algebroids, 
if the torsions of $T_\mu N$ and 
$T_\gamma \, ^t\! N$ vanish, $\mu_N= \{N, \mu\}$ and 
$\gamma_{- \, ^t\!N} =\{ N,\gamma\}$
are Lie algebroid structures on $A$ and $A^*$, respectively.
By \eqref{double}, $\{{\widetilde \N}, {\widetilde \mu} + {\widetilde
  \gamma} \} 
= {\{N,\mu\}} + {\{N, \gamma\}}$, therefore `deforming' by $\N$ the Dorfman bracket 
of the double $A \oplus A^*$, equipped with Courant algebroid structure 
$ \Theta = \mu + \gamma$, amounts to considering the `double' of the
pair of Lie algebroids  $(A, \mu_N)$ and $(A^*,\gamma_{- \, ^t\!N})$.
However the Lie algebroids  $(A, \mu_N)$ and $(A^*,\gamma_{- \, ^t\!N})$
do not in general constitute a Lie bialgebroid.

\begin{theorem}\label{bialg}
Let  $((A, \mu), (A^*,\gamma))$ be a Lie bialgebroid.
Assume that $N$ is proportional to a cps structure on $A$ and that 
$ \, ^t\!N$ is  proportional to a cps structure on $A^*$. Then 
$((A, \mu_N),(A^*,\gamma_{-\, ^t\!N}))$  is a
Lie bialgebroid and 
its double is the Courant
algebroid $(A\oplus A^*, \{\widetilde \N,  \mu + \gamma\})$.
\end{theorem}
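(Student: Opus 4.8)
The plan is to deduce the whole statement from the single equation $\{\{\widetilde\N,\Theta\},\{\widetilde\N,\Theta\}\}=0$ with $\Theta=\mu+\gamma$, together with \eqref{double}. First I would record the relevant properties of $\N=\begin{pmatrix}N&0\\0&-\,^t\!N\end{pmatrix}$: it is skew-symmetric, and since $N$ is proportional to a cps structure we have $N^2=\lambda\,\mathrm{Id}_A$ for a scalar $\lambda$, hence $\N^2=\lambda\,\mathrm{Id}_E$, so $\N$ is proportional to a generalized almost cps structure. The hypotheses that $N$ and $\,^t\!N$ are proportional to cps structures give $T_\mu N=0$ and $T_\gamma\,^t\!N=0$, so by Theorem \ref{biNij} (which rests on Theorem \ref{A3bialg}(ii)) the endomorphism $\N$ is a Nijenhuis tensor for the Courant algebroid $(A\oplus A^*,\Theta)$.

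The key step is to pass from ``Nijenhuis'' to ``the deformed bracket is again Courant''. Because $\N$ is proportional to a generalized almost cps structure, Theorem \ref{A3}(iii) gives $\{\{\widetilde\N,\Theta\},\widetilde\N\}=-\lambda\,\Theta$ once $T_\Theta\N=0$, so $\N$ is a deforming, hence weak deforming, tensor. By Corollary \ref{corollaryweak} (equivalently Theorem \ref{CNS}) the element $\{\widetilde\N,\Theta\}$ is then a Courant algebroid structure on $A\oplus A^*$, that is, $\{\{\widetilde\N,\Theta\},\{\widetilde\N,\Theta\}\}=0$. By \eqref{double} this structure equals $\{\widetilde\N,\mu+\gamma\}=\mu_N+\gamma_{-\,^t\!N}$, which at the same time will identify the double at the end of the argument.

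It then remains to split this master equation into the three defining relations of a Lie bialgebroid. From the vanishing of $T_\mu N$ and $T_\gamma\,^t\!N$, as recalled just before the theorem, $(A,\mu_N)$ and $(A^*,\gamma_{-\,^t\!N})$ are Lie algebroids, i.e. $\{\mu_N,\mu_N\}=0$ and $\{\gamma_{-\,^t\!N},\gamma_{-\,^t\!N}\}=0$. Expanding $\{\mu_N+\gamma_{-\,^t\!N},\mu_N+\gamma_{-\,^t\!N}\}=0$ and using the graded symmetry of the big bracket on these degree-three elements yields $2\,\{\mu_N,\gamma_{-\,^t\!N}\}=0$, so the compatibility $\{\mu_N,\gamma_{-\,^t\!N}\}=0$ holds; alternatively one sees directly that the three summands lie in distinct bidegrees (namely $(1,3)$, $(2,2)$ and $(3,1)$ for the $(A,A^*)$-bigrading of $\A$) and so vanish separately. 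These three identities are exactly the defining relations of a Lie bialgebroid, so $((A,\mu_N),(A^*,\gamma_{-\,^t\!N}))$ is one, and by the identification above its double is the Courant algebroid $(A\oplus A^*,\{\widetilde\N,\mu+\gamma\})$.

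I expect no genuine computational difficulty: the proof is an assembly of Theorems \ref{A3bialg}, \ref{biNij}, \ref{A3}(iii) and \ref{CNS}. The only points requiring care are the bidegree bookkeeping (or, equivalently, the graded-symmetry sign) that isolates the cross term $\{\mu_N,\gamma_{-\,^t\!N}\}$ from the self-commutation equation, and the preliminary check that proportionality to a cps structure really delivers $\N^2=\lambda\,\mathrm{Id}_E$, so that the cps-dependent results may be invoked.
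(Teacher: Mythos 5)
Your proposal is correct and follows essentially the same route as the paper, which simply declares the theorem a corollary of Proposition \ref{propdouble} and Theorem \ref{biNij}; you have merely made explicit the implicit chain (Nijenhuis $\Rightarrow$ deforming $\Rightarrow$ weak deforming $\Rightarrow$ $\{\widetilde\N,\Theta\}$ is a Courant structure via Theorem \ref{CNS}, then the bidegree splitting of $\{\mu_N+\gamma_{-\,^t\!N},\mu_N+\gamma_{-\,^t\!N}\}=0$ to isolate the compatibility condition). All the cited ingredients are used as intended, and the bidegree bookkeeping you flag is exactly the point the paper leaves to the reader.
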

\begin{proof}
This result is a corollary of Proposition \ref{propdouble} 
and Theorem \ref{biNij}.
\end{proof}

\begin{remark}\rm{
It is possible to consider the deformation of a Lie
bialgebroid by a pair of unrelated 
vector bundle endomorphisms, $N : A \to A$ and $N': A^* \to A^*$, satisfying
$T_\mu N =$ and $T_\gamma N' = 0$. The condition
for the pair of Lie algebroids $(A, \{N,\mu\})$ and $(A^*, \{N', \gamma\})$
to constitute a Lie bialgebroid is
\begin{equation}
\{\{N,\mu\} + \{N',\gamma\} , \{N,\mu\}+ \{N', \gamma\}\} = 0.
\end{equation}
Given that $\{\{N,\mu\}, \{N,\mu\}\} =0$
and  $\{\{N',\gamma\},\{N',\gamma\}\} =0$, this condition becomes
\begin{equation}\label{NN'}
\{\{N,\mu\} , \{N',\gamma\}\} = 0.
\end{equation}
This compatibility condition, $\{\mu_{N},\gamma_{N'}\}=0$, 
means that each deformed structure, $\mu_{N}$
and $\gamma_{N'}$, is a cocycle for the other, or equivalently, that
$\d_{\mu_{N}}$ is a derivation of $[~,~]_{\gamma_{N'}}$, or that
$\d_{\gamma_{N'}}$ is a derivation of $[~,~]_{\mu_{N}}$.
(If $N'= \rm{Id}_{A^*}$, then condition \eqref{NN'}
means that $\d_{\mu_{N}}$ is a derivation of $[~,~]_{\gamma}$, or
$\d_{\gamma}$ is a derivation of
$[~,~]_{\mu_{N}}$. This result is in \cite{nunes}, theorem 3.1.)}
\end{remark}

\subsection{Deformations of trivial Lie bialgebroids}\label{trivial}
We now consider the particular case of the trivial Lie bialgebroids,
such as the generalized tangent bundles. 
It follows from Theorem \ref{A3bialg} that, 
if $((A, \mu), (A^*, 0))$ is the trivial 
Lie bialgebroid associated
with the Lie algebroid $(A,\mu)$, then 
\begin{equation}
\widetilde{{T}_{\mu}\N}
=\widetilde{T_\mu N}.
\end{equation}
In particular, in the case of 
a trivial Lie bialgebroid  $((A, \mu),(A^*,0))$, deforming the
Dorfman bracket of the double by
$\N$ amounts to deforming $(A, \mu)$ by $N$, and Proposition
\ref{propdouble}, Theorem \ref{biNij} and Remark \ref{Airred}
imply the following. 
\begin{corollary}\label{4.4}
Let  $(A, \mu)$ be a Lie algebroid, and let $N$ be a vector bundle
endomorphism of $A$. Let $[~,~]$ be the Dorfman bracket of
the double of the trivial Lie bialgebroid  $((A, \mu),(A^*,0))$, and
let $\N = \begin{pmatrix}N &  0 \\0 & - \, ^t \!N \end{pmatrix}$.

\noindent (i) The deformed bracket $[~,~]_\N$ is the double of the
bracket $[~,~]^\mu_N$.

\noindent (ii)
If $T_\mu N$
vanishes, then $((A, \mu_N),(A^*,0))$ is a  trivial Lie bialgebroid. 
 
\noindent (iii)
If $N$ is proportional to a cps
structure on $A$, then the torsion of $\N$ vanishes.

\noindent (iv)
Conversely, if the torsion of $\N$
vanishes, and if $A$ is irreducible, 
then $N$ is proportional to a cps structure on $A$.

\noindent (v)
If $N$ is proportional to a cps
structure on $A$, then
the double of the trivial Lie bialgebroid 
 $((A, \mu_N),(A^*,0))$  is the Courant
algebroid $(A\oplus A^*, \{\widetilde \N,\mu\})$.

\end{corollary}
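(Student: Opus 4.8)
The plan is to recognize Corollary \ref{4.4} as the specialization of the machinery already built, to the trivial case $\gamma = 0$, and to assemble its five parts by citing the theorems listed in the corollary's preamble rather than computing anew. Concretely, I would first fix notation: let $\Theta = \mu$ be the Courant structure on the double of $((A,\mu),(A^*,0))$, and let $\N = \begin{pmatrix} N & 0 \\ 0 & -\,^t\!N \end{pmatrix}$ be the skew-symmetric endomorphism that $N$ induces, so that $\widetilde{\N}$ is the element of $\A^2$ satisfying \eqref{tildescriptN}.

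For part (i) I would invoke Proposition \ref{propdouble} directly: with $\gamma = 0$, the double of the deformed brackets $[~,~]^\mu_N$ and $[~,~]^\gamma_{-\,^t\!N}$ reduces to the double of $[~,~]^\mu_N$ alone, since the bracket on $A^*$ deforms by $\{N, 0\} = 0$. Hence $[~,~]_\N = [~,~]^{\mu}_\N$ is exactly this double, which is part (i). For part (ii), setting $\gamma = 0$ in Theorem \ref{A3bialg} gives $\widetilde{T_\mu \N} = \widetilde{T_\mu N}$, so if $T_\mu N = 0$ then $\mu_N = \{N,\mu\}$ is a Lie algebroid structure (its torsion being a Lie cocycle, indeed zero) and $((A,\mu_N),(A^*,0))$ is again a \emph{trivial} Lie bialgebroid, since the second structure is $0$ and the bialgebroid compatibility $\{\mu_N, 0\} = 0$ is automatic. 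Part (iii) is the $\gamma = 0$ instance of Theorem \ref{biNij}: $T_\gamma \,^t\!N = 0$ trivially when $\gamma = 0$, so the single hypothesis $T_\mu N = 0$ (i.e.\ $N$ a cps structure) already yields $T_\mu \N = 0$. Part (iv) is the corresponding instance of Theorem \ref{biNijconverse}, or more efficiently of Remark \ref{Airred}, which records that the converse holds whenever $(A,\mu)$ is irreducible \emph{regardless} of $(A^*,\gamma)$; I would cite Remark \ref{Airred} to avoid needing irreducibility of the whole double.

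Part (v) is the $\gamma = 0$ specialization of Theorem \ref{bialg}: when $N$ is a cps structure on $A$, so is $\,^t\!N$ (vacuously on $A^*$ since $\gamma = 0$ forces $T_\gamma \,^t\!N = 0$), hence $((A,\mu_N),(A^*,0))$ is a Lie bialgebroid by Theorem \ref{bialg}, and its double is the Courant algebroid $(A \oplus A^*, \{\widetilde{\N}, \mu\})$, using \eqref{double} to identify $\{\widetilde{\N}, \mu + \gamma\} = \{\widetilde{\N}, \mu\} = \mu_N$ when $\gamma = 0$. The only point requiring a moment's care is the compatibility assertion implicit in (ii) and (v): one must check that the deformed structures still satisfy the bialgebroid master equation $\{\mu_N + \gamma_{-\,^t\!N}, \mu_N + \gamma_{-\,^t\!N}\} = 0$, but with $\gamma = 0$ this collapses to $\{\mu_N, \mu_N\} = 0$, which is precisely the statement that $\mu_N$ is a Lie algebroid structure, already secured by $T_\mu N = 0$.

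I expect no real obstacle here; the force of the corollary lies entirely in the earlier theorems, and the work is bookkeeping the reduction $\gamma = 0$. The one step that is least mechanical is confirming that the trivial bialgebroid structure is \emph{preserved} under deformation --- that is, that deforming by $\N$ does not introduce a nonzero $A^*$-bracket --- which follows cleanly from $\{N, \gamma\} = \{N, 0\} = 0$ via \eqref{double}, so that $\gamma_{-\,^t\!N} = 0$ identically. Thus the proof reduces to a short paragraph citing Proposition \ref{propdouble}, Theorem \ref{biNij}, Remark \ref{Airred}, and Theorem \ref{bialg} in turn.
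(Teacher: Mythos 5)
Your proposal is correct and follows essentially the same route as the paper, which likewise derives the corollary by specializing to $\gamma=0$ and citing Proposition \ref{propdouble}, Theorem \ref{A3bialg}, Theorem \ref{biNij} and Remark \ref{Airred} (your additional appeal to Theorem \ref{bialg} for part (v) is harmless, since that theorem is itself a corollary of the same ingredients). No gaps.
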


For the case of a generalized tangent bundle, 
$TM \oplus T^*M$, parts (i) and (ii) of Corollary \ref{4.4} 
were proved in theorems 2 and 3  of \cite{CGM}.
It was also proved in theorem 3 that, 
when the base manifold $M$ is
connected, 
if the torsions of $N$ and $\N$ both vanish, then $N$ is 
proportional to a cps structure on $TM$.
Since, by lemma 2 of \cite{CGM}, a tangent bundle over a
connected base is an irreducible Lie algebroid, 
this result is implied by (iv) above.

\medskip

There is a more interesting result that does not require  $N^2$ to be
a scalar multiple of the identity.

\begin{theorem}\label{theoremweakdef}
Let $(A, \mu)$ be a Lie algebroid, and let $N$ be a vector\break bundle
endomorphism of $A$. If $N$ is a Nijenhuis tensor for $(A, \mu)$, then 
$\N = \begin{pmatrix}N &  0 \\0 & - \, ^t \!N \end{pmatrix}$ is a weak
deforming tensor  for the Courant
algebroid $(A\oplus A^*, \mu)$, 
and $\{{\widetilde\N}, \mu\}$ is a
Courant algebroid structure on $A\oplus A^*$, which is 
the double of the trivial Lie bialgebroid defined 
by $(A, \mu_N)$ .
\end{theorem}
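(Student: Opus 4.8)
The plan is to reduce Theorem \ref{theoremweakdef} to the already-established machinery, chiefly Theorem \ref{CNS}, Theorem \ref{A3bialg}, and Corollary \ref{4.4}, so that very little new computation is required. The key observation is that the claim has three parts: that $\N$ is a weak deforming tensor for the trivial double $(A \oplus A^*, \Theta)$ with $\Theta = \mu$; that $\{\widetilde\N, \mu\}$ is therefore a Courant algebroid structure; and that this new structure is the double of $(A, \mu_N)$ with the trivial cobracket. By Theorem \ref{CNS}, the first two parts are logically equivalent and reduce to showing that $\{\{\widetilde\N, \mu\}, \widetilde\N\}$ is a $\d_\mu$-cocycle.

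First I would use Theorem \ref{A3bialg}(i), applied with $\gamma = 0$, to identify $\{\{\widetilde\N, \mu\}, \widetilde\N\}$ with the element $\{\{N, \mu\}, N\}$ of $\A^3$; note that part (i) of that theorem is stated without any assumption that $N^2$ be a scalar multiple of the identity, so it applies to our general Nijenhuis $N$. The crucial point is now to show that this element is a $\d_\mu$-cocycle, i.e. $\{\mu, \{\{N, \mu\}, N\}\} = 0$. Here I would exploit that $N$ is a Nijenhuis tensor for $(A, \mu)$: on a Lie algebroid, the vanishing of the torsion $T_\mu N$ means precisely that the deformed bracket $\mu_N = \{N, \mu\}$ is itself a Lie algebroid structure, hence $\{\mu_N, \mu_N\} = 0$. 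Expanding $\{\mu_N, \mu_N\} = \{\{N,\mu\},\{N,\mu\}\}$ via the Jacobi identity and using $\{\mu,\mu\}=0$ should produce the desired cocycle condition — this is the analogue, for the ambient big-bracket Poisson algebra, of the standard fact that a Nijenhuis deformation is compatible with the original structure.

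For the identification of the deformed structure, I would invoke Corollary \ref{4.4}(v): since on a Lie algebroid a Nijenhuis tensor whose square is scalar is not needed — rather, the identity $\{\widetilde\N, \mu\} = \{N, \mu\} = \mu_N$ from \eqref{double} holds for any $N$ — the Courant structure $\{\widetilde\N, \mu\}$ is exactly $\mu_N$, which is the element of $\A^3$ defining the double of the pair $(A, \mu_N)$ and $(A^*, 0)$. Because $T_\mu N = 0$ guarantees $\{\mu_N, \mu_N\} = 0$, the pair $((A, \mu_N), (A^*, 0))$ is a trivial Lie bialgebroid, and its double is precisely the Courant algebroid with cubic element $\mu_N$. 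This is essentially the content of Corollary \ref{4.4}(i) and (ii) read off without the cps hypothesis.

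The main obstacle I anticipate is the cocycle verification $\{\mu, \{\{N,\mu\}, N\}\} = 0$ for a general Nijenhuis $N$ whose square is \emph{not} assumed scalar, since Theorem \ref{A3bialg}(ii) and Corollary \ref{4.4}(iii) both invoke the cps condition precisely to guarantee that the torsion defines an element of $\A^3$ and hence that $T_\mu N = 0$ implies a clean deforming relation. The resolution is that weak deforming, unlike the deforming or Nijenhuis conditions of Theorem \ref{A3}, does not require $T_\Theta\N \in \A^3$ at all: we only need the cocycle property of $\{\{\widetilde\N, \mu\}, \widetilde\N\}$ as a \emph{function} on $\widetilde E$, which follows purely from $\{\mu_N, \mu_N\} = 0$ and the Jacobi identity, with no tensoriality or skew-symmetry of the torsion needed. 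Thus the whole force of the theorem is that the weak-deforming condition is the correct, weaker hypothesis, and the Nijenhuis property of $N$ on $(A,\mu)$ delivers it directly through $\{N,\mu\}$ being a Lie algebroid structure.
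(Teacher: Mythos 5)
Your proposal is correct, and its overall architecture coincides with the paper's: identify $\{\{\widetilde\N,\mu\},\widetilde\N\}$ with $\{\{N,\mu\},N\}$ via Theorem \ref{A3bialg}(i) with $\gamma=0$, show that this element of $\A^3$ is a $\d_\mu$-cocycle, and conclude by Theorem \ref{CNS}; the identification $\{\widetilde\N,\mu\}=\{N,\mu\}=\mu_N$ from \eqref{double} then gives the last assertion. Where you diverge is in the cocycle step. The paper encodes the hypothesis $T_\mu N=0$ directly as the big-bracket identity $\{\{N,\mu\},N\}=\{\mu,N^2\}$, whose right-hand side is manifestly $\d_\mu$-exact up to sign, hence a cocycle since $\{\mu,\mu\}=0$. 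You instead invoke the classical fact that a Nijenhuis tensor on a Lie algebroid produces a Lie algebroid structure $\mu_N$, so $\{\mu_N,\mu_N\}=0$, and then unwind this by the graded Jacobi identity: since $\{\{N,\mu\},\mu\}=\tfrac12\{N,\{\mu,\mu\}\}=0$, one gets $\{\{N,\mu\},\{N,\mu\}\}=\pm\{\mu,\{\{N,\mu\},N\}\}$, so $\{\mu_N,\mu_N\}=0$ is literally equivalent to the cocycle condition. Both routes are legitimate; the paper's is more self-contained (one line from the definition of the torsion in $\A$), while yours makes transparent why weak deforming is exactly the right notion here -- it is equivalent to $\mu_N$ satisfying the master equation -- and correctly isolates the point that no cps hypothesis is needed because weak deforming never requires $T_\mu\N$ itself to be tensorial. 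Your reliance on ``Nijenhuis $\Rightarrow$ $\mu_N$ is a Lie algebroid structure'' is legitimate: the paper itself states and uses this fact in Section 4.6 and in Corollary \ref{4.4}(ii).
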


\begin{proof}
The hypothesis $T_\mu N=0$ is equivalent to $\{\{N,\mu\},N\} = \{\mu,N^2\}$.
Because $\{\mu,\mu\}=0$, this relation 
implies that  $\{\{N,\mu\},N\}$ is a $\d_\mu$-cocycle
and therefore that $\{\{{\widetilde \N}, \mu \},{\widetilde \N}\}$ is
a $\d_\mu$-cocycle. Therefore $\N$ is a weak
deforming tensor for $\mu$.  The Courant algebroid structure
 $\{{\widetilde\N}, \mu\}$ is then $\{N, \mu\}$, i.e., 
the double of the trivial Lie
bialgebroid $((A,\mu_N), (A^*,0))$.
\end{proof}

\subsection{Compatible structures and deforming tensors}\label{compatible}

We shall show that various types of composite structures on Lie algebroids,
for which see, e.g., \cite{yksM} 
\cite{yksR} and references cited there, give rise
to infinitesimal deformations of the Dorfman bracket 
of the double of any trivial Lie
bialgebroid. We assume that 
 $(A, \mu)$ is a Lie algebroid, and we consider 
the trivial Lie bialgebroid  $((A, \mu),(A^*,0))$,

\begin{proposition}\label{PN}
Let $N$ be a vector bundle endomorphism of $A$, and 
let $\pi$ be a
bivector on $A$ such that $N \pi = \pi \, ^t\!N$. 
 If $(\pi,N)$ is a
  PN-structure on $A$, then the skew-symmetric endomorphism of $A
  \oplus A^*$,
$\N = \begin{pmatrix}N &  \pi \\0 & - \, ^t \!N \end{pmatrix}$ is a weak deforming
  tensor for $(A \oplus A^*, \mu)$.
\end{proposition}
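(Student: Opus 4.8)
The plan is to reduce the statement to the weak-deforming criterion of Definition \ref{weakdef}(i): I must show that $\{\{\widetilde\N,\mu\},\widetilde\N\}$ is a $\d_\mu$-cocycle, i.e. that $\{\mu,\{\{\widetilde\N,\mu\},\widetilde\N\}\}=0$. The natural first step is to decompose $\widetilde\N$ into its constituent pieces in $\A^2$. Since $\N=\begin{pmatrix}N&\pi\\0&-\,^t\!N\end{pmatrix}$, the function $\widetilde\N$ splits as $\widetilde\N = N + \widetilde\pi$, where $N$ is the degree-$2$ function associated with the endomorphism as in the computation $\widetilde\N = N^\alpha_\beta\tau^\beta\theta_\alpha$, and $\widetilde\pi$ is the function in $\A^2$ corresponding to the bivector $\pi$. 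Then $\{\widetilde\N,\mu\}=\{N,\mu\}+\{\widetilde\pi,\mu\}$, and I would expand $\{\{\widetilde\N,\mu\},\widetilde\N\}$ bilinearly into four terms indexed by the pairs drawn from $\{N,\widetilde\pi\}$.

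Next I would identify each of these four terms with a big-bracket expression that I can control using the PN-structure hypothesis. The term $\{\{N,\mu\},N\}$ is, by formula \eqref{torsioncps} applied on $A$, governed by the torsion $T_\mu N$; but since I am not assuming $N^2$ is a scalar multiple of the identity I cannot invoke that formula directly, so instead I would use the hypothesis that $(\pi,N)$ is a PN-structure. The defining relations of a PN-structure — namely that $\pi$ is Poisson ($\{\{\widetilde\pi,\mu\},\widetilde\pi\}=0$, equivalently $[\pi,\pi]=0$), that the deformed bracket $\mu_N=\{N,\mu\}$ is compatible with $\mu$, and that the Magri--Morosi concomitant vanishes — translate precisely into big-bracket identities among $\{N,\mu\}$, $\{\widetilde\pi,\mu\}$, and their further brackets with $N$ and $\widetilde\pi$. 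The compatibility condition $N\pi=\pi\,^t\!N$ ensures that $\widetilde\pi$ and $N$ interact symmetrically, so that the cross terms $\{\{N,\mu\},\widetilde\pi\}$ and $\{\{\widetilde\pi,\mu\},N\}$ combine coherently rather than producing an obstruction.

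I would then assemble these pieces and verify that $\{\mu,\{\{\widetilde\N,\mu\},\widetilde\N\}\}$ vanishes by applying the graded Jacobi identity repeatedly, using $\{\mu,\mu\}=0$ throughout to eliminate terms. The key algebraic input is that, for a PN-structure, the expression $\{\{\widetilde\N,\mu\},\widetilde\N\}$ (or its image under $\d_\mu$) is exactly what the Poisson--Nijenhuis compatibility conditions are designed to annihilate after one more application of $\d_\mu$. The main obstacle will be bookkeeping: correctly translating the three classical PN-compatibility conditions into big-bracket form and tracking the degree-$2$ versus degree-$0$ grading of the various terms so that the cross terms cancel. Once the dictionary between the PN-structure axioms and the big-bracket identities is set up cleanly, the cocycle condition should follow by a short Jacobi-identity manipulation rather than by a lengthy coordinate computation, in keeping with the paper's stated aim of obtaining such results with few or no computations.
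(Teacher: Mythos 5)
Your plan is essentially the paper's own proof: decompose $\widetilde\N = N + \widetilde\pi$, expand $\{\{\widetilde\N,\mu\},\widetilde\N\}$ bilinearly into $\{\{N,\mu\},N\} + [\pi,\pi]^\mu - C_\mu(\pi,N)$, kill the last two terms with the Poisson and compatibility conditions, and conclude that the surviving term is a $\d_\mu$-cocycle because $N$ is Nijenhuis. The one step you hedge on is handled without any cps assumption: for an endomorphism of the Lie algebroid $A$, $T_\mu N=0$ is equivalent to $\{\{N,\mu\},N\}=\{\mu,N^2\}$ (as in the proof of Theorem \ref{theoremweakdef}), which is a $\d_\mu$-coboundary and hence a cocycle.
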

\begin{proof}
We denote by $C_\mu(\pi, N) = \{ \pi,\{N , \mu\}\}+\{N,\{ \pi,
\mu\} \}$ the tensor whose vanishing expresses the compatibility
of a Poisson structure $\pi$ and a Nijenhuis tensor $N$ on $A$. We compute
$$
\{\{ \widetilde{\N}, \mu\}, \widetilde{\N}\}
= \{\{N + \pi, \mu\}, N + \pi\}
$$
$$
=
\{\{N , \mu\}, N \} +\{\{ \pi, \mu\},  \pi\}+ \{\{N , \mu\},  \pi\}+\{\{ \pi,
\mu\}, N \}
$$
$$ 
= \{\{N , \mu\}, N \} + [\pi,\pi]^\mu - C_\mu(\pi, N).
$$
Here $[~,~]^\mu$ is the Schouten-Nijenhuis bracket of multivectors. 
Therefore, if we assume that  
$\pi$ is a Poisson bivector and that $N$ and ${\pi}$ are compatible,
then
$\{\{\widetilde{\N}, \mu\},\widetilde{\N}\}=\{\{N , \mu\}, N\}$. 
When $N$ is a Nijenhuis tensor on $(A,\mu)$, 
$\{\{N , \mu\}, N\}$ is a
$\d_\mu$-cocycle
and therefore $\{\{\widetilde{\N}, \mu\},\widetilde{\N}\}$ is a
$\d_\mu$-cocycle.
\end{proof}

As a consequence we recover the well-known fact that when 
$(\pi,N)$ is a
  PN-structure on $A$, then $\{\widetilde{\N}, \mu\} = \{N,\mu\} + \{\pi,
  \mu\}$ is a Courant algebroid structure on $A \oplus A^*$, the
  double of the Lie bialgebroid $((A, \mu_N),(A^*, \gamma_\pi))$, where
    $\gamma_\pi = \{\pi, \mu \}$. See, e.g., theorem 4 of \cite{yksR}.

If $N^2$ is proportional to the identity of $A$ and if $\pi$ is a
bivector such that $N \pi = \pi \, ^t\!N$, then  $\N^2$ is
proportional to the identity of $A \oplus A^*$ and
$\widetilde{{T}_{\mu}(\N)}$ is identified with 
${T_\mu(N)}  - \frac{1}{2}
[\pi,\pi]^\mu +\frac{1}{2} C_\mu(\pi, N)$ in $\A^3$. Using the
bigrading of $\A$, we conclude,

\begin{proposition}\label{above}
If $N$ is proportional to an almost cps structure on $A$ and $\pi$ is a
bivector such that $N \pi = \pi \, ^t\!N$, then
${{T}_{\mu}(\N)} = 0$ if and only if $(\pi, N)$ is a
PN-structure.
\end{proposition}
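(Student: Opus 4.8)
\medskip

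The plan is to reduce everything to the identification, established in the paragraph preceding the statement, of $\widetilde{T_\mu\N}$ as an element of $\A^3$. Under the hypotheses $N^2 = \lambda\,\mathrm{Id}_A$ and $N\pi = \pi\,{}^t\!N$ the square $\N^2$ is a scalar multiple of $\mathrm{Id}_{A\oplus A^*}$, so Theorem \ref{A3} guarantees that $T_\mu\N$ is $C^\infty(M)$-bilinear and skew-symmetric, hence defines a genuine element of $\A^3$, and one has
$$\widetilde{T_\mu\N} = T_\mu N - \tfrac12\,[\pi,\pi]^\mu + \tfrac12\,C_\mu(\pi,N).$$
With this in hand the ``if'' direction is immediate: when $(\pi,N)$ is a PN-structure, $\pi$ is Poisson so $[\pi,\pi]^\mu = 0$, $N$ is a Nijenhuis tensor so $T_\mu N = 0$, and the compatibility condition defining a PN-structure is precisely $C_\mu(\pi,N) = 0$; all three summands vanish and therefore $T_\mu\N = 0$.

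The content of the statement lies in the converse, and here I would invoke the bigrading of $\A$ inherited from the splitting $E = A \oplus A^*$, under which $\Gamma(\wedge^3(A\oplus A^*))$ decomposes as $\bigoplus_{p+q=3}\Gamma(\wedge^p A^* \otimes \wedge^q A)$, the summand $\wedge^p A^* \otimes \wedge^q A$ carrying bidegree $(p,q)$. The three tensors on the right-hand side of the identity are of distinct types: $T_\mu N$ is a covariant-$2$, contravariant-$1$ tensor on $A$ and so has bidegree $(2,1)$; the Schouten square $[\pi,\pi]^\mu$ is a trivector on $A$ and so has bidegree $(0,3)$; and the compatibility tensor $C_\mu(\pi,N)$ is a covariant-$1$, contravariant-$2$ tensor and so has bidegree $(1,2)$. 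This can be read off directly from the coordinate expressions, since $\widetilde{T_\mu N}$ involves two $\tau$ and one $\theta$, $\widetilde{[\pi,\pi]^\mu}$ three $\theta$, and $\widetilde{C_\mu(\pi,N)}$ one $\tau$ and two $\theta$. Since these three bidegrees are pairwise distinct, the three summands lie in independent graded components of $\A^3$, and the vanishing of their sum $\widetilde{T_\mu\N}$ forces each of them to vanish separately. The resulting conditions $[\pi,\pi]^\mu = 0$, $T_\mu N = 0$ and $C_\mu(\pi,N) = 0$, together with the standing algebraic hypothesis $N\pi = \pi\,{}^t\!N$, are exactly the axioms of a PN-structure, which establishes the converse and hence the equivalence.

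I expect the only delicate point to be the bookkeeping of bidegrees: one must fix a convention for the bigrading of $\A$ and check that the three summands really do land in three different pieces of $\Gamma(\wedge^3(A\oplus A^*))$. Once this separation is confirmed the equivalence is purely formal, since the identity for $\widetilde{T_\mu\N}$ and the recognition of its three summands as the Nijenhuis, Poisson and compatibility tensors have already been carried out in the material leading up to the proposition; no further computation in the Poisson algebra is required.
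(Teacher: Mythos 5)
Your proposal is correct and follows exactly the route the paper takes: it identifies $\widetilde{T_\mu\N}$ with $T_\mu N-\tfrac12[\pi,\pi]^\mu+\tfrac12 C_\mu(\pi,N)$ via the computation already done for Proposition \ref{PN} together with formula \eqref{torsioncps}, and then uses the bigrading of $\A$ to separate the three summands, which is precisely what the paper's one-line ``Using the bigrading of $\A$, we conclude'' refers to. Your explicit check that the summands lie in the distinct bidegrees $(2,1)$, $(0,3)$ and $(1,2)$ is accurate and merely fills in the detail the paper leaves implicit.
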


We can also relate $\Omega$N-structures with deforming
tensor
s, obtaining an analogue of Proposition \ref{PN}, 
although there is no obvious analogue of Proposition \ref{above}.

\begin{proposition}\label{OmegaN} 
 Let $N$ be a vector bundle endomorphism of $A$,
 and let $\omega$ be a $2$-form on $A$ such that $\omega N = \, ^t\!N \omega$.
If $(\omega,N)$ is an $\Omega$N-structure on $A$, then the
skew-symmetric endomorphism of $A \oplus A^*$, $\N= 
\begin{pmatrix}N & 0 \\ \omega & - \, ^t \!N\end{pmatrix}$ is a weak
deforming tensor for $(A \oplus A^*, \mu)$.
\end{proposition}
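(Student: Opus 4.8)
The plan is to mirror the computation used for Proposition \ref{PN}, replacing the bivector $\pi$ by the $2$-form $\omega$, and to exploit the same key identity from Theorem \ref{CNS}: the endomorphism $\N$ is a weak deforming tensor for $\mu$ precisely when $\{\{\widetilde{\N}, \mu\}, \widetilde{\N}\}$ is a $\d_\mu$-cocycle. So the whole proof reduces to computing this double bracket and showing it equals a known cocycle.

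First I would write $\widetilde{\N} = N + \widetilde{\omega}$ in $\A^2$, where $\widetilde{\omega}$ is the element of $\A^2$ associated with the $2$-form $\omega$ (now a form part rather than a bivector part). By bilinearity of the big bracket,
\begin{equation*}
\{\{\widetilde{\N}, \mu\}, \widetilde{\N}\}
= \{\{N, \mu\}, N\} + \{\{\omega, \mu\}, \omega\}
+ \{\{N, \mu\}, \omega\} + \{\{\omega, \mu\}, N\}.
\end{equation*}
Next I would identify each of the four terms. The first term $\{\{N, \mu\}, N\}$ is, by formula \eqref{torsioncps}, governed by the torsion $T_\mu N$. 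The cross terms $\{\{N, \mu\}, \omega\} + \{\{\omega, \mu\}, N\}$ should assemble into the tensor $C_\mu(\omega, N) = \{\omega, \{N, \mu\}\} + \{N, \{\omega, \mu\}\}$ whose vanishing expresses the compatibility of $\omega$ and $N$, exactly analogous to the $C_\mu(\pi, N)$ of Proposition \ref{PN}. The purely form term $\{\{\omega, \mu\}, \omega\}$ should reduce to $\d_\mu \omega$ paired with itself, i.e.\ a quantity controlled by the closedness condition $\d_\mu \omega = 0$ that is part of the $\Omega$N-structure hypothesis.

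The main obstacle I anticipate is bookkeeping the bidegrees correctly: unlike the PN-case, where $\pi$ raises the contravariant degree, here $\omega$ is a covariant object, so the terms $\{\{\omega, \mu\}, \omega\}$ and the cross terms land in different bigraded pieces of $\A^3$ than in Proposition \ref{PN}, and I must verify that the $\Omega$N-conditions (namely $\d_\mu\omega = 0$ and the vanishing of the appropriate compatibility/torsion tensor) are exactly what kill the non-cocycle parts. In particular I expect the analogue of $[\pi,\pi]^\mu$ to be replaced by a term built from $\d_\mu\omega$, which vanishes because $\omega$ is $\mu$-closed in an $\Omega$N-structure. The algebraic manipulations are routine big-bracket computations using the Jacobi identity, so no genuine difficulty beyond careful degree-tracking should arise.

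Once these identifications are in place, the hypotheses of an $\Omega$N-structure guarantee that $\{\{\widetilde{\N}, \mu\}, \widetilde{\N}\}$ collapses to $\{\{N, \mu\}, N\}$, and the Nijenhuis property $T_\mu N = 0$ (which is part of the definition of an $\Omega$N-structure) together with $\{\mu,\mu\}=0$ forces this to be a $\d_\mu$-cocycle, exactly as at the end of the proof of Proposition \ref{PN}. By Theorem \ref{CNS} this is equivalent to $\N$ being a weak deforming tensor for $(A \oplus A^*, \mu)$, completing the argument.
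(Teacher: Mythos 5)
Your proposal is correct and follows essentially the same route as the paper: expand $\{\{N+\omega,\mu\},N+\omega\}$ into four terms, kill everything except $\{\{N,\mu\},N\}$ using the $\Omega$N hypotheses, and conclude from $T_\mu N=0$ and $\{\mu,\mu\}=0$ that the result is a $\d_\mu$-cocycle. The only small divergence is that the paper notes $\{\{\omega,\mu\},\omega\}$ vanishes \emph{identically} for bidegree reasons (it would have negative contravariant degree), not because $\omega$ is $\d_\mu$-closed, and it disposes of the two cross terms separately via the two $\Omega$N conditions $\d_\mu\omega=\{\mu,\omega\}=0$ and $\d_{\mu_N}\omega=\{\{N,\mu\},\omega\}=0$ rather than packaging them into a single compatibility tensor $C_\mu(\omega,N)$.
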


\begin{proof}
We compute
$$
\{\{\{ \widetilde{\N}, \mu\}, \widetilde{\N}\}
= \{\{N + \omega, \mu\}, N + \omega\}
$$
$$
=
\{\{N , \mu\}, N \} + \{\{N , \mu\},  \omega\}+\{\{ \omega,
\mu\}, N \}
$$
since $\{\{ \omega, \mu\},  \omega\} =0$.
When $(\omega,N)$ is an $\Omega$N-structure, both $\d_\mu\omega =
\{\mu,\omega\}$ and $\d_{\mu_N}\omega =
\{\{N, \mu\},\omega\}$ vanish. 
We conclude, using the vanishing
 of the torsion of $N$, as in the proof of
 Proposition \ref{PN}.
\end{proof}

\medskip

\noindent{\bf{Acknowledgments}} \,
My sincere thanks to Paulo Antumes, Joana Nunes da Costa, Janusz Grabowski,
Camille Laurent-Gengoux, Dmitry Roytenberg and the anonymous referees 
for their comments on a preliminary version of this paper.

\end{document}